\makeatletter \@addtoreset{equation}{section}
\newtheorem{theorem}{Theorem}[section]
\newtheorem{definition}[theorem]{Definition}
\newtheorem{lemma}[theorem]{Lemma}
\newtheorem{corollary}[theorem]{Corollary}
\newtheorem{remark}[theorem]{Remark}
\newtheorem{algorithm}{Algorithm}
\newenvironment{subtheorem}[1]{%
  \def\subtheoremcounter{#1}%
  \refstepcounter{#1}%
  \protected@edef\theparentnumber{\csname the#1\endcsname}%
  \setcounter{parentnumber}{\value{#1}}%
  \setcounter{#1}{0}%
  \expandafter\def\csname the#1\endcsname{\theparentnumber\Alph{#1}}%
  \ignorespaces
}{%
  \setcounter{\subtheoremcounter}{\value{parentnumber}}%
  \ignorespacesafterend
}
\newcounter{parentnumber}
\newenvironment{proof}{\par \vspace{0.3cm} \noindent{\sc Proof:} \ignorespaces}%
{\nolinebreak\hfill $\square$\par \medskip}
\title{Approximation of Lyapunov Functions from Noisy Data}
\author{P. Giesl\thanks{Department of Mathematics, University of Sussex, Falmer, BN1 9QH, UK ({p.a.giesl@sussex.ac.uk})},
\and B. Hamzi\thanks{ Department of Mathematics, Ko\c{c} University, Istanbul, Turkey   \&   Department of Mathematics, Alfaisal University, Riyadh,  Saudi Arabia  ({bhamzi@alfaisal.edu})},
\and M. Rasmussen\thanks{Department of Mathematics, Imperial College London, SW7 2AZ, UK ({m.rasmussen@imperial.ac.uk})},
\and K. N. Webster\thanks{Department of Mathematics, Imperial College London, SW7 2AZ, UK 
\&
Potsdam Institute for Climate Impact Research, PO Box 60 12 03, 14412 Potsdam, Germany ({kevin.webster@pik-potsdam.de}).}
}
\begin{document}
\maketitle

\begin{abstract}
Methods have previously been developed for the approximation of Lyapunov functions using radial basis functions. However these methods assume that the evolution equations are known. We consider the problem of approximating a given Lyapunov function using radial basis functions where the evolution equations are not known, but we instead have sampled data which is contaminated with noise. We propose an algorithm in which we first approximate the underlying vector field, and use this approximation to then approximate the Lyapunov function. Our approach combines elements of machine learning/statistical learning theory with the existing theory of Lyapunov function approximation. Error estimates are provided for our algorithm.
\end{abstract}

\pagestyle{myheadings}
\thispagestyle{plain}
\markboth{P.~Giesl, B.~Hamzi, M.~Rasmussen \& K.N.~Webster}{Approximation of Lyapunov functions from noisy data}

\section{Introduction}			\label{sec:introduction}

Ordinary differential equations model large classes of applications such as planetary motion, chemical reactions, population dynamics or consumer behaviour. A breakthrough in the understanding of ordinary differential equations was initiated by Poincar\'e and Lyapunov in the late 19th century, who developed an approach that embraced the use of topological and geometrical techniques for the study of dynamical systems. 
A key component of this theory is Lyapunov functions, which can be used to determine the basin of attraction of an asymptotically stable equilibrium.

In general, it is not possible to find an explicit analytical expression for a Lyapunov function associated to a nonlinear
differential equation. Many methods have been proposed to numerically construct Lyapunov functions, see \cite{GieHaf15:b} for a recent review. These methods include the SOS (sums of squares) method, which constructs a polynomial Lyapunov function by semidefinite optimization \cite{prajna}.
Another method constructs
   a continuous piecewise affine (CPA) Lyapunov function using linear optimization \cite{Haf2007mon}. A further method is based on Zubov's equation and computes a solution of this partial differential equation  \cite{zubov2001camilli}.
   Lyapunov functions can also be constructed using
 set oriented
methods  \cite{book2002grune}.
The method that is also used in this paper is based on approximating the solution of a PDE using radial basis functions \cite{Gie07:a}.
All these methods to approximate Lyapunov functions rely on the knowledge of the right hand side of the
differential equation.


In this paper, we develop a method to approximate Lyapunov functions where the right hand side is unknown, but we have sampled data of the system, which is contaminated by noise.
We will first approximate the right hand side of the differential equation, and then use this approximation to approximate the Lyapunov function. Our approach combines and develops previous results from  statistical learning theory \cite{SmaZho04, SmaZho05, SmaZho07} together with existing methods using radial basis functions \cite{Gie07:a, GieWen07}, which use the framework of reproducing kernel Hilbert spaces (RKHS).

The use of RKHS spaces to approximate important quantities in dynamical systems has previously been exploited by Smale and Zhou to approximate a hyperbolic dynamical system  \cite{hyperbolic}. Bouvrie and Hamzi also use RKHS spaces to approximate some key quantities in control and random dynamical systems \cite{allerton, acc2012}.





\section{Setting of the Problem and Main Result}				\label{sec:setting}

We consider ordinary differential equations of the form
\begin{equation}
  \dot{x}  = f^*(x), 	\label{eqn:dynsys}
\end{equation}
where $f^*: \mathbb{R}^d \to \mathbb{R}^d$ is a smooth vector field and dot denotes differentiation with respect to time.
We define the flow $\varphi_{f^*}:\mathbb{R}^d \times \mathbb{R}\rightarrow \mathbb{R}^d$ by $\varphi_{f^*}(\psi,t) := x(t)$, where $x(t)$ solves \eqref{eqn:dynsys} with $x(0)=\psi$.

We assume that \eqref{eqn:dynsys} has a fixed point $\overline{x}$ that is  exponentially asymptotically stable. Define the basin of attraction as $A(\overline{x}):=\{\psi\in\mathbb{R}^d\mid \lim_{t\rightarrow\infty}\varphi_{f^*}(\psi,t)=\overline{x}\}$. Note that $A(\overline{x})\ne\emptyset$ and $A(\overline{x})$ is open. Subsets of the basin of attraction can be determined by the use of Lyapunov functions, which are functions decreasing along solutions of \eqref{eqn:dynsys}. We consider two types of Lyapunov functions $V$ and $T$, as described in Theorems~\ref{thm:VLyapunovconverse} and \ref{thm:TLyapunovconverse} below. These Lyapunov functions satisfy
\begin{eqnarray*}
\langle \nabla V(x),f^*(x)\rangle_{\mathbb{R}^d} &=& -p(x), \qquad x\in A(\overline{x}),\\
\langle \nabla T(x),f^*(x)\rangle_{\mathbb{R}^d} &=& -\overline{c}, \qquad x\in A(\overline{x})\setminus\{\overline{x}\},
\end{eqnarray*}
where $p$ is a smooth function with $p(x)>0$ for $x\not=\bar x$ and $p(\bar x)=0$, and $\overline{c}$ is a positive constant. The scalar products on the left hand sides are called the orbital derivatives of $V$ and $T$ with respect to \eqref{eqn:dynsys}, which are the derivatives of $V$ and $T$ along solutions of \eqref{eqn:dynsys}. The orbital derivatives of $V$ and $T$ are negative, which implies that $V$ and $T$ are decreasing along solutions.

We assume that the function $f^*$ is unknown, but we have sampled data of the form $(x_i, y_i)$ in $X\times \mathbb{R}^d$, $i=1,\dots,m$, with $y_i = f^*(x_i) + \eta_{x_i}$. We assume that the one-dimensional random variables $\eta_{x_i}^k \in \mathbb{R}^d$, where $i=1,\dots,m$ and $k=1,\dots,d$, are independent random variables drawn from a probability distribution with zero mean and variance $(\sigma_{x_i}^k)^2$ bounded by $\sigma^2$. Here $X$ is a nonempty and compact subset of $\mathbb{R}^d$ with $C^1$ boundary.

In \S\ref{sec:algorithm} we provide an algorithm to approximately reconstruct the Lyapunov functions $V$ and $T$ by  functions $\hat{V}$  and $\hat{T}$. The following main theorem provides error estimates in a compact set $\mathcal{D} \subset A(\overline{x})\cap X$, which depend on the density of the data, measured by two key quantities: the fill distance of the data $h_\mathbf{x}$ (see Definition \ref{def:filldistance}) and the norm of the volume weights $\mathbf{w}$ corresponding to the Voronoi tessellation of the data (see Definition~\ref{def:Voronoi}).

\begin{theorem}\label{thm:mainresult}
 Consider \eqref{eqn:dynsys} such that $f^* \in C^{\nu_1}(\mathbb{R}^d,\mathbb{R}^d)$ with $\nu_1\ge (3d + 7)/2$ if $d$ is odd, or $\nu_1\ge (3d + 12)/2$ if $d$ is even. Let $\tau_1,\tau_2\in\mathbb{R}$ and $k_1, k_2\in\mathbb{N}$ be such that $\tau_1 = k_1 + (d+1)/2$ with $\lceil \tau_1 \rceil  = \nu_1$, and $k_2 = k_1 - (d+2)$ (if $d$ is odd) or $k_2 = k_1-(d+3)$ (if $d$ is even). Define $\tau_2:=k_2 + (d+1)/2$.

  Let $\Omega\subset A(\overline{x})$ be a compact set and $\mathcal{D}:= \Omega \setminus B_\varepsilon(\overline{x})\subset X$, with $\varepsilon>0$  small enough so that $\mathcal{D}\ne\emptyset$. For $h_\mathbf{x}$, $||\mathbf{w}||_{\mathbb{R}^m}$ and $h_\mathbf{q}$ sufficiently small, the following holds:
\begin{enumerate}
\item
For every $0<\delta<1$, the reconstruction $\hat{V}$ of the Lyapunov function $V$ defined in Theorem \ref{thm:VLyapunovconverse} satisfies the following estimate with probability $1-\delta$:
\begin{eqnarray}
\left| \left| \langle\nabla \hat{V},f^*\rangle_{\mathbb{R}^d} - \langle\nabla {V},f^*\rangle_{\mathbb{R}^d}    \right| \right|_{L^\infty(\mathcal{D})}  &\le &
C  ||V||_{W_2^{\tau_2}(\Omega_V)}  \left(h_\mathbf{q}^{k_2-\frac{1}{2}}\right.\nonumber\\
&&\left.+ \frac{||\mathbf{w}||_{\mathbb{R}^m}}{\lambda \sqrt{\delta}} + \lambda^{r-\frac{3}{2}} h_\mathbf{x}  + \lambda^{r-\frac{1}{2}}\right),				\label{eqn:mainresultV}
\end{eqnarray}	
where $\Omega_V\supset \mathcal{D}$  is a certain compact subset of $A(\overline{x})$, and $\frac{1}{2} < r \le 1$.
\item For every $0<\delta<1$, the reconstruction $\hat{T}$ of the Lyapunov function $T$ defined in Theorem \ref{thm:TLyapunovconverse} satisfies the following estimates with probability $1-\delta$:
\begin{eqnarray}
\left| \left| \langle\nabla \hat{T},f^*\rangle_{\mathbb{R}^d} - \langle\nabla {T},f^*\rangle_{\mathbb{R}^d}  \right| \right|_{L^\infty(\mathcal{D})} & \le &
C  ||T||_{W_2^{\tau_2}(\Omega_T)} \left(h_\mathbf{q}^{k_2-\frac{1}{2}}\right.\nonumber\\
&&\left.+ \frac{||\mathbf{w}||_{\mathbb{R}^m}}{\lambda \sqrt{\delta}}+ \lambda^{r-\frac{3}{2}} h_\mathbf{x}  + \lambda^{r-\frac{1}{2}}\right),				\label{eqn:mainresultT1}\\
\left| \left| \hat{T} - T\right| \right|_{L^\infty(\Gamma)} & \le &C h_{\tilde{\mathbf{q}}}^{k_2+\frac{1}{2}}||T||_{W_2^{\tau_2}(\Omega_T)},
\label{eqn:mainresultT2}
\end{eqnarray}	
where $\Gamma$ is a non-characteristic hypersurface on which $T$ has defined values (see Definition \ref{def:noncharhyp}),  $\Omega_T\supset\mathcal{D}$ is a certain compact subset of $A(\overline{x})$, and $\frac{1}{2} < r \le 1$.\\
\end{enumerate}
\end{theorem}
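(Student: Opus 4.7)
The plan is to split the construction of $\hat V$ (and analogously $\hat T$) into a \emph{learning stage}, which produces an approximate vector field $\hat f$ from the noisy samples $(x_i,y_i)$, and a \emph{reconstruction stage}, which feeds $\hat f$ into an RBF collocation scheme for the Lyapunov PDE $\langle\nabla V,f\rangle = -p$ (respectively $\langle\nabla T,f\rangle = -\overline c$). The two stages are then glued together by the triangle decomposition
\[
\langle\nabla\hat V,f^*\rangle - \langle\nabla V,f^*\rangle \;=\; \bigl(\langle\nabla\hat V,\hat f\rangle - \langle\nabla V,f^*\rangle\bigr) \;+\; \langle\nabla\hat V,\,f^* - \hat f\rangle,
\]
which isolates a pure collocation error from a vector-field error.

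For the learning stage I would regress each component of $f^*$ by Tikhonov-regularised least squares in a Sobolev-type RKHS of smoothness $\tau_1$, with parameter $\lambda$. Following the Smale--Zhou programme, $\hat f - f^*$ decomposes into a bias piece (regularisation error) and a variance piece (noise error). The bias is estimated through a source condition on $f^*\in W_2^{\tau_1}$: an RKHS bound contributes $\lambda^{r-1/2}$, and a sampling inequality tied to the fill distance $h_\mathbf{x}$ transfers the $L^2$ part into a pointwise bound at the cost of a factor $\lambda^{-1}h_\mathbf{x}$, producing the term $\lambda^{r-3/2}h_\mathbf{x}$. The variance is controlled by the independence and bounded variance $\sigma^2$ of the noise $\eta^k_{x_i}$, using the Voronoi weights $\mathbf{w}$ to turn the empirical inner product into a quadrature on $X$; a single Chebyshev step then converts the $\mathcal O(\|\mathbf{w}\|_{\mathbb{R}^m}/\lambda)$ second-moment bound into the high-probability bound $\|\mathbf{w}\|_{\mathbb{R}^m}/(\lambda\sqrt\delta)$.

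For the reconstruction stage I would use the RBF collocation method of \cite{Gie07:a, GieWen07} with the perturbed right-hand side $\hat f$: enforce $\langle\nabla\hat V(q_j),\hat f(q_j)\rangle = -p(q_j)$ on the collocation grid $\mathbf q$, together with the interpolation condition on $\Gamma$ for $\hat T$, in the native space of a Wendland kernel of smoothness $\tau_2 = k_2 + (d+1)/2$. Standard generalised-Hermite RBF error estimates then give the residual rate $h_\mathbf q^{k_2-1/2}\|V\|_{W_2^{\tau_2}(\Omega_V)}$, and the boundary interpolation on $\Gamma$ delivers the better rate $h_{\tilde{\mathbf q}}^{k_2+1/2}\|T\|_{W_2^{\tau_2}(\Omega_T)}$ because no derivative is lost. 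The arithmetic $k_2 = k_1-(d+2)$ or $-(d+3)$ encodes the smoothness loss between the kernel used to learn $f^*$ and the kernel used to reconstruct $V$, with the parity split reflecting the admissible half-integer smoothness for Wendland kernels in even versus odd dimension.

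The assembly is then immediate: the collocation residual controls the first bracket, and the second bracket is bounded by $\|\nabla\hat V\|_{L^\infty(\mathcal D)}\,\|\hat f - f^*\|_{L^\infty(\mathcal D)}$. The main obstacle I anticipate is obtaining a uniform bound on $\|\nabla\hat V\|_{L^\infty(\mathcal D)}$ while the data used to build $\hat V$ come from the perturbed field $\hat f$: this requires a stability/well-posedness statement for the RBF collocation system under right-hand-side perturbation, and, in parallel, a sampling inequality that transfers the probabilistic RKHS/$L^2$ bound of the learning stage into an $L^\infty$ bound on $\mathcal D$ with constants that remain independent of $\lambda$ as $\lambda\to 0$. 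The enlarged sets $\Omega_V\supset\mathcal D$ and $\Omega_T\supset\mathcal D$ (with $B_\varepsilon(\overline x)$ excised, since the normalisation $-\overline c$ for $T$ is singular at $\overline x$) are precisely what is needed to make the Sobolev norms $\|V\|_{W_2^{\tau_2}(\Omega_V)}$ and $\|T\|_{W_2^{\tau_2}(\Omega_T)}$ finite while keeping the estimates uniform on the target set $\mathcal D$.
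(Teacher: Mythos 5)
Your overall plan matches the paper's: a regularised least-squares learning stage for $f^*$ with the bias/sampling/noise decomposition, glued to an RBF collocation stage for the Lyapunov PDE by exactly the triangle identity you write, $\langle\nabla\hat V,f^*\rangle+p = \bigl(\langle\nabla\hat V, f_{\mathbf z,\lambda}\rangle+p\bigr) + \langle\nabla\hat V, f^*-f_{\mathbf z,\lambda}\rangle$. What is missing is the resolution of the ``main obstacle'' you correctly flag, and it is not quite what you guess. The paper introduces a \emph{proxy Lyapunov function} $V_{\mathbf z,\lambda}$: the exact solution of $\langle\nabla V_{\mathbf z,\lambda},f_{\mathbf z,\lambda}\rangle=-p$ with the boundary data $\xi_V$ on $\Gamma$, built explicitly from the travel-time function for the learned flow (Definition~\ref{def:VzlambdaTzlambda}). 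The crucial observation is that the computed $\hat V$ is the norm-minimal generalised interpolant of $V_{\mathbf z,\lambda}$ --- not of $V$ --- since both satisfy $\langle\nabla\cdot\,,f_{\mathbf z,\lambda}\rangle=-p$ at the collocation nodes. This delivers simultaneously the collocation residual $h_{\mathbf q}^{k_2-1/2}\|V_{\mathbf z,\lambda}\|_{W_2^{\tau_2}}$ from Theorem~\ref{thm:GieWen} and the uniform gradient bound $\|\nabla\hat V\|_{L^\infty}\le C\|\hat V\|_{K^2}\le C\|V_{\mathbf z,\lambda}\|_{W_2^{\tau_2}}$. What is then needed is stability of the Lyapunov function under perturbation of the \emph{vector field}, $\|V_{\mathbf z,\lambda}\|_{W_2^{\tau_2}}\le C\|V\|_{W_2^{\tau_2}}$ (Lemma~\ref{lem:Vzlambdabounded}), rather than ``stability of the RBF collocation system under right-hand-side perturbation.''

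A second, harder gap: to invoke Theorem~\ref{thm:GieWen} with $g=f_{\mathbf z,\lambda}$ and to prove $V_{\mathbf z,\lambda}\to V$ (non-characteristicity of $\Gamma$ for the learned field, smooth dependence of the travel time via the implicit function theorem, $W_2^{\tau_2}$-boundedness of $V_{\mathbf z,\lambda}$), you need $\|f_{\mathbf z,\lambda}-f^*\|_{C^{\nu_2}}$ small, not just $L^\infty$ small. The Smale--Zhou bound controls $\|f_{\mathbf z,\lambda}-f^*\|_{K^1}$ only up to a bounded (not small) term (Lemma~\ref{lem:fzlambdafstarbounded}), so it directly yields only $L^\infty$ smallness. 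The paper upgrades this to $C^{\nu_2}$ smallness via the Wendland--Rieger sampling inequality (Theorem~\ref{thm:WenRie}): interpolate between the small $L^\infty$ error and the bounded $W_2^{\tau_1}$ seminorm to get a small $W_2^{k_1-1}$ norm, then embed into $C^{\nu_2}$. That interpolation is the actual origin of the smoothness loss $k_2 = k_1-(d+2)$ (odd $d$) or $k_1-(d+3)$ (even $d$), and of the hypotheses $\nu_1 \ge (3d+7)/2$ or $(3d+12)/2$; it is a more structural reason than the ``parity of admissible Wendland smoothness'' you propose.
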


\begin{figure}			
\begin{center}
\begin{overpic}[width=12cm]{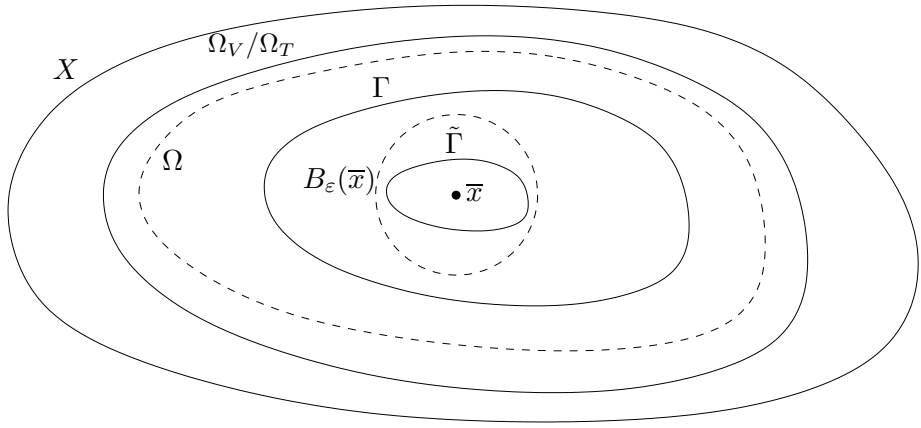}
\put(5,38){\small{$X$}}
\put(22,41){\footnotesize{$\Omega_V/\Omega_T$}}
\put(17,28){\small{$\Omega$}}
\put(40,36){\small{$\Gamma$}}
\put(32.4,26){\small{$B_{\varepsilon}(\overline{x})$}}
\put(48.,29.8){\small{$\tilde{\Gamma}$}}
\put(50.3,24.5){\small{$\overline{x}$}}
\end{overpic}
\end{center}
\caption{Domains and sets used in the statement and proof of Theorem \ref{thm:mainresult}. The dotted lines show the boundary of the set $\mathcal{D}=\Omega\setminus B_{\varepsilon}(\overline{x})$ where the Lyapunov functions are approximated. We also have  $\Omega_V,\Omega_T\subset A(\overline{x})$.}		\label{fig:domains}
\end{figure}

The main point is that the expressions on the right hand side of \eqref{eqn:mainresultV}--\eqref{eqn:mainresultT2} can be made arbitrarily small as the data density increases and for suitably chosen $\lambda$ (see equation \eqref{eqn:lambdachoice}). Therefore the orbital derivative of our Lyapunov function approximations $\hat{V}$ and $\hat{T}$ become arbitrarily close in the infinity norm to those of $V$ and $T$ respectively. Estimate \eqref{eqn:mainresultV} implies that the orbital derivative of $\hat{V}$ will be negative in $\mathcal{D}$ (which does not contain a small neighbourhood of the equilibrium $\overline{x}$), since $\langle\nabla V(x), f^*(x)\rangle_{\mathbb{R}^d} = -p(x)$ where $p$ is a positive definite function (see Theorem \ref{thm:VLyapunovconverse}). The analogous statement is true for $\hat{T}$, since $\langle\nabla T(x), f^*(x)\rangle_{\mathbb{R}^d} = -\overline{c} <0$.
In principle the neighbourhood $B_\varepsilon(\overline{x})$ can shrink as the data density increases (as $h_\mathbf{x}$ and $||\mathbf{w}||_{\mathbb{R}^m}$ tend to zero).

The above estimate contains $\lambda >0$ as a regularisation parameter of our algorithm, and $h_\mathbf{q}$ as the fill distance of a set of sampled points in $\Omega_V$ (resp. $\Omega_T$) of our choosing. Similarly, $h_{\tilde{\mathbf{q}}}$ is the fill distance of a set of sampled points on $\Gamma$, which we are able to choose. The constants in the above estimates depend on $d$, $\sigma$, the choice of function spaces for approximation and  the vector field $f^*$.

%
%
%

The rest of the paper is organised as follows. In \S\ref{sec:conversethms} we provide the converse theorems for the Lyapunov functions $V$ and $T$. In \S\ref{sec:background} we set out the framework for the function spaces that are used to approximate the Lyapunov functions, as well as previous results on the approximation of Lyapunov functions when the right hand side of \eqref{eqn:dynsys} is known. The algorithms themselves that are used to compute $\hat{V}$ and $\hat{T}$ are detailed in \S\ref{sec:algorithm}. In \S\ref{sec:errorf} we provide an estimate for our approximation of the right hand side of \eqref{eqn:dynsys}, which is then used in the proof of Theorem \ref{thm:mainresult}  in \S\ref{sec:proofofmainthm}.


\section{Converse theorems for Lyapunov functions}			\label{sec:conversethms}

The concept of a Lyapunov function dates back to 1893, where Lyapunov introduced these functions for the stability analysis of an equilibrium for a given differential equation, without the explicit knowledge of the solutions \cite{Lya07}. Many converse theorems have been proved that guarantee the existence of a Lyapunov function under certain conditions, see \cite{Gie07:a,Hah67,Kel15} for an overview. Massera \cite{Mas49} provided the first main converse theorem for $C^1$ vector fields where $A(\overline{x})=\mathbb{R}^d$, with further developments by several authors to prove the existence of smooth Lyapunov functions under weak smoothness assumptions on the right hand side (see e.g. \cite{Hah59,LinSonWan96,Wes67}).

The existence of a Lyapunov function for system \eqref{eqn:dynsys} with given values of the orbital derivative has been shown by Bhatia \cite{Bha67,BhaSze70}, as stated in the following theorems (see also \cite{Gie07:a}). We also refer to \cite{GieHaf15} for a proof that the conditions on the function $p$ given here are sufficient to define the Lyapunov function $V$, in contrast to the conditions given in \cite{Gie07:a}. First we make the following definition.

\begin{definition}			\label{def:classK}
A continuous function $\alpha: [0,\infty) \rightarrow [0,\infty)$ is a class $\mathcal{K}$ function if $\alpha(0) = 0$ and $\alpha$ is strictly monotonically increasing.
\end{definition}

\begin{theorem}			\label{thm:VLyapunovconverse}
Consider the autonomous system of differential equations
$\dot{x}=f^*(x)$,
where $f^*\in C^{\nu_1}(\mathbb{R}^d,\mathbb{R}^d)$, $\nu_1\ge1$, $d\in\mathbb{N}$. We assume the system to have an
exponentially asymptotically stable equilibrium $\overline{x}$. Let $p\in C^{\nu_1}(\mathbb{R}^d,\mathbb{R})$ be a function with the following properties:
\begin{enumerate}
\item $p(x)>0$ for $x\ne\overline{x}$, and $p(\overline{x}) = 0$.
\item There is a class $\mathcal{K}$ function $\alpha$ such that $p(x-\overline{x}) \ge \alpha(||x-\overline{x}||_2)$ for all $x\in\mathbb{R}^d$.
\end{enumerate}

Then there exists a Lyapunov function $V\in C^{\nu_1}(A(\overline{x}),\mathbb{R})$ (where $A(\overline{x})$ is the basin of attraction of $\overline{x}$), such that
\begin{equation}			\label{eq:Veq}
\langle \nabla V(x), f^*(x)\rangle_{\mathbb{R}^d} = -p(x)
\end{equation}
holds  for all $x\in A(\overline{x})$. The Lyapunov function $V$ is uniquely defined up to a constant.
\end{theorem}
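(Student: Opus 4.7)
The plan is to construct the Lyapunov function explicitly by integrating $p$ along trajectories, namely
\begin{equation*}
V(x) := \int_0^\infty p\bigl(\varphi_{f^*}(x,t)\bigr)\, dt, \qquad x\in A(\overline{x}).
\end{equation*}
The first step is to show this improper integral converges for every $x\in A(\overline{x})$. Exponential asymptotic stability of $\overline{x}$ yields a neighbourhood $U$ of $\overline{x}$ and constants $C,\alpha>0$ such that $\|\varphi_{f^*}(y,t)-\overline{x}\|_2 \le Ce^{-\alpha t}\|y-\overline{x}\|_2$ for $y\in U$ and $t\ge 0$. Combined with $p\in C^{\nu_1}$ and $p(\overline{x})=0$, which gives $|p(z)|\le L\|z-\overline{x}\|_2$ locally, the integrand decays exponentially on $U$. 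For arbitrary $x\in A(\overline{x})$, pick $T(x)\ge 0$ with $\varphi_{f^*}(x,T(x))\in U$ and split the integral; the tail converges by the previous estimate and the head is finite by continuity.

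The key regularity step is to show $V\in C^{\nu_1}(A(\overline{x}),\mathbb{R})$. Since $f^*\in C^{\nu_1}$, the flow $\varphi_{f^*}(\cdot,t)$ is $C^{\nu_1}$ in $x$, and its spatial derivatives up to order $\nu_1$ satisfy variational equations. I would first establish that on $U$ these derivatives satisfy exponential bounds of the form $\|D_x^j \varphi_{f^*}(x,t)\|\le C_j e^{-\alpha_j t}$, obtained by iterating Gr\"onwall estimates on the variational equations together with the contractive estimate above; this is standard but technical. Then, for arbitrary $x\in A(\overline{x})$, the cocycle property $\varphi_{f^*}(x,t+T(x)) = \varphi_{f^*}(\varphi_{f^*}(x,T(x)),t)$ together with the chain rule transfers the exponential decay of derivatives of $\varphi_{f^*}$ from $U$ to all of $A(\overline{x})$, at the price of a multiplicative constant depending on $T(x)$ that is locally uniform. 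Differentiation under the integral sign, justified by the resulting uniform integrable bounds on compact subsets of $A(\overline{x})$, then gives $V\in C^{\nu_1}$.

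The orbital derivative identity \eqref{eq:Veq} follows by observing that
\begin{equation*}
V(\varphi_{f^*}(x,s)) = \int_s^\infty p(\varphi_{f^*}(x,t))\,dt = V(x)-\int_0^s p(\varphi_{f^*}(x,t))\,dt,
\end{equation*}
so differentiating at $s=0$ yields $\langle \nabla V(x),f^*(x)\rangle_{\mathbb{R}^d}=-p(x)$. For uniqueness up to a constant, if $V_1,V_2$ both satisfy \eqref{eq:Veq}, then $W:=V_1-V_2$ has zero orbital derivative, so $t\mapsto W(\varphi_{f^*}(x,t))$ is constant on each trajectory. Continuity of $W$ at $\overline{x}$ (which lies in $A(\overline{x})$) and $\lim_{t\to\infty}\varphi_{f^*}(x,t)=\overline{x}$ for every $x\in A(\overline{x})$ force $W(x)=W(\overline{x})$ on the whole basin. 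The main obstacle in this programme is the regularity step: getting the exponential decay of \emph{all} spatial derivatives of the flow up to order $\nu_1$ on $A(\overline{x})$, not just on a neighbourhood of $\overline{x}$, via the cocycle property and the higher-order variational equations; the condition $p(x-\overline{x})\ge \alpha(\|x-\overline{x}\|_2)$ of class $\mathcal{K}$ is not needed here but ensures $V$ is positive definite and hence a bona fide Lyapunov function.
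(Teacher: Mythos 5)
Your proof is correct and follows the standard integral-representation construction $V(x)=\int_0^\infty p(\varphi_{f^*}(x,t))\,dt$, with the higher-order variational equations and the cocycle property used to obtain the exponential decay of spatial derivatives of the flow needed for $C^{\nu_1}$ regularity. The paper does not prove this converse theorem itself but cites Bhatia \cite{Bha67,BhaSze70} and Giesl \cite{Gie07:a,GieHaf15}, and the integral formula you use is exactly the one the paper quotes from \cite[Theorem 2.46]{Gie07:a} in the proof of Lemma~\ref{lem:VxiV}.
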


We may also choose $p(x)$ to be a positive constant in equation \eqref{eq:Veq} to obtain a Lyapunov function $T$, defined on $A(\overline{x})\backslash\{\overline{x}\}$, for which $\lim_{x\rightarrow\overline{x}}T(x) = -\infty$.

\begin{theorem}			\label{thm:TLyapunovconverse}
Consider the autonomous system of differential equations
$\dot{x}=f^*(x)$,
where $f^*\in C^{\nu_1}(\mathbb{R}^d,\mathbb{R}^d)$, $\nu_1\ge1$, $d\in\mathbb{N}$. We assume the system to have an
exponentially asymptotically stable equilibrium $\overline{x}$. Then for all $\overline{c}\in\mathbb{R}^+$, there exists a Lyapunov function $T\in C^{\nu_1}(A(\overline{x})\,\backslash\,\{\overline{x}\},\mathbb{R})$ such that
\begin{equation}			\label{eq:Teq}
\langle \nabla T(x), f^*(x)\rangle_{\mathbb{R}^d} = -\overline{c}.
\end{equation}
Moreover, $\lim_{x\rightarrow\overline{x}}T(x)=-\infty$.
\end{theorem}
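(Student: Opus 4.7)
The plan is to construct $T$ by the method of characteristics. Equation \eqref{eq:Teq} simply says that $T$ decreases at constant rate $\overline{c}$ along every orbit of \eqref{eqn:dynsys}, so it suffices to specify $T$ on one non-characteristic hypersurface $\Gamma$ that each orbit in $A(\overline{x})\setminus\{\overline{x}\}$ crosses exactly once, then transport those values along the flow.

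First I would exploit exponential asymptotic stability of $\overline{x}$ to construct a smooth local Lyapunov function $W$ in a neighbourhood $U$ of $\overline{x}$. A standard choice is the quadratic form $W(x) = (x-\overline{x})^\top P (x-\overline{x})$, where $P$ solves the Lyapunov matrix equation $P\,Df^*(\overline{x}) + Df^*(\overline{x})^\top P = -I$; hyperbolicity of $Df^*(\overline{x})$ guarantees a unique positive definite solution, and a short computation then yields $\langle \nabla W(x), f^*(x)\rangle_{\mathbb{R}^d} < 0$ on some punctured neighbourhood $U\setminus\{\overline{x}\}$. For any sufficiently small $c>0$, the level set $\Gamma := \{W = c\}\cap U$ is a $C^\infty$ hypersurface to which $f^*$ is strictly transverse, and $\{W \le c\}$ is compactly contained in $U$.

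Next, for each $x\in A(\overline{x})\setminus\{\overline{x}\}$ I would define $\tau(x)\in\mathbb{R}$ as the unique $t$ with $\varphi_{f^*}(x,t)\in\Gamma$. Existence for $x\in\{W\ge c\}\cap A(\overline{x})$ follows because the forward orbit enters the open set $\{W<c\}$ and hence crosses $\Gamma$ at a first positive time; for $x\in \{W<c\}\setminus\{\overline{x}\}$ one flows backward, noting that instability of $\overline{x}$ in reverse time forces the backward orbit to leave the compact set $\{W\le c\}$ through $\Gamma$ in finite negative time. Uniqueness and $C^{\nu_1}$ regularity of $\tau$ follow from the implicit function theorem applied to $g(t,x) := W(\varphi_{f^*}(x,t)) - c$, whose $t$-derivative equals $\langle\nabla W, f^*\rangle\circ\varphi_{f^*}$ and is nonzero at $t=\tau(x)$ by transversality. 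Setting $T(x) := \overline{c}\,\tau(x)$, the cocycle identity $\tau(\varphi_{f^*}(x,s)) = \tau(x) - s$ differentiated at $s=0$ yields \eqref{eq:Teq}, and $\tau(x)\to -\infty$ as $x\to\overline{x}$ because the forward orbit from any point of $\Gamma$ takes arbitrarily long to enter an arbitrarily small neighbourhood of $\overline{x}$, by continuous dependence of $\varphi_{f^*}$ on initial data on each finite interval $[0,S]$.

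The main obstacle is the three-fold claim that every orbit in $A(\overline{x})\setminus\{\overline{x}\}$ meets $\Gamma$ exactly once, that the resulting hitting time $\tau$ is of class $C^{\nu_1}$ on all of $A(\overline{x})\setminus\{\overline{x}\}$, and that $T = \overline{c}\,\tau$ inherits the limit $T\to-\infty$ at $\overline{x}$. Each ingredient depends crucially on the \emph{exponential} (rather than merely asymptotic) stability hypothesis: it is what provides the smooth, strictly decreasing quadratic $W$ that makes $\Gamma$ non-characteristic and simultaneously forces backward instability near $\overline{x}$, so that $\tau$ is well-defined and smooth on the whole punctured basin.
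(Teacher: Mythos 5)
Your construction is correct and coincides with the standard hitting-time argument that underlies the converse theorems the paper cites (Bhatia, and Giesl's \emph{Construction of Global Lyapunov Functions Using Radial Basis Functions}): the paper itself does not prove Theorem~\ref{thm:TLyapunovconverse}, and your formula $T=\overline{c}\,\tau$ is exactly the representation the paper records in Remark~\ref{rem:xiT} (with $\xi_T\equiv 0$). One small terminological slip: for the Lyapunov matrix equation $P\,Df^*(\overline{x})+Df^*(\overline{x})^\top P=-I$ to have a positive definite solution you need $Df^*(\overline{x})$ to be Hurwitz, not merely hyperbolic, which is indeed what exponential asymptotic stability of $\overline{x}$ provides.
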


The Lyapunov function $T$ will be uniquely defined if its values  are given on a non-characteristic hypersurface $\Gamma\subset A(\overline{x})$ \cite{Gie07:a} by a function $\xi_T\in C^{\nu_1}(\Gamma,\mathbb{R})$; that is, $T(x) = \xi_T(x)$ for $x\in \Gamma$.

\begin{definition}[Non-characteristic hypersurface]			\label{def:noncharhyp}
Consider $\dot{x}=f^*(x)$, where $f^*\in C^{\nu_1}(\mathbb{R}^d,\mathbb{R}^d)$, $\nu_1\ge1$, $d\in\mathbb{N}$. Let $h\in C^{\nu_1}(\mathbb{R}^d,\mathbb{R})$ and recall $\varphi_{f^*}: \mathbb{R}^d \times \mathbb{R} \rightarrow \mathbb{R}^d$ denotes the flow mapping. The set $\Gamma\subset\mathbb{R}^d$ is
called a non-characteristic hypersurface if
\begin{enumerate}
\item $\Gamma$ is compact,
\item $h(x) = 0$ if and only if $x\in\Gamma$,
\item $h'(x) = \langle h(x),f^*(x)\rangle_{\mathbb{R}^d}<0$ holds for all $x\in\Gamma$, and
\item for each $x\in A(\overline{x})\backslash\{\overline{x}\}$ there is a time $\theta(x)\in\mathbb{R}$ such that $\varphi_{f^*}(x,\theta(x))\in\Gamma$.
\end{enumerate}
\end{definition}

An example of a non-characteristic hypersurface is the level set of a (local) Lyapunov function, see \cite{Gie07:a}. In what follows we assume that we have chosen a  non-characteristic hypersurface $\Gamma$ together with a function $\xi_T\in C^{\nu_1}(\Gamma,\mathbb{R})$ so that $T$ is uniquely defined.

\begin{remark}
The Lyapunov function $V$ is smooth and defined on the entire basin of attraction $A(\overline{x})$. However, its orbital derivative vanishes at the equilibrium $\overline{x}$, and therefore estimates that bound the error of the orbital derivative for numerical approximations of $V$ cannot guarantee negative orbital derivative arbitrarily close to the equilibrium $\overline{x}$. On the other hand, the Lyapunov function $T$ is not even defined at $\overline{x}$ and is unbounded near the equilibrium. However, its definition has the advantage that it is not required that we know where the equilibrium is.
\end{remark}

In our approach to approximate Lyapunov functions directly from data, we will provide estimates for the approximation of both $V$ and $T$. The strategy is to first approximate $f^*$ from the data, and use this in turn to approximate the Lyapunov function.


\section{Function spaces and approximation theorems for Lyapunov functions}			\label{sec:background}			

\subsection{Reproducing kernel Hilbert spaces}		\label{sec:RKHS}

The function spaces that we  use to search for our approximations to both $f^*$ and the Lyapunov functions $V$ and $T$ will be  reproducing kernel Hilbert spaces (RKHS). For a survey of the main properties of RKHS spaces mentioned in this section, we refer to \cite{CucSma01}.

In order to define an RKHS function space we first fix a continuous, symmetric, positive definite function (a ``kernel'') $K:X\times X\rightarrow\mathbb{R}$, and set $K_x:= K(\cdot,x)$. Define the Hilbert space $\mathcal{H}_K$ by first considering all finite linear combinations of functions $K_x$, that is $\sum_{x_i\in X} a_iK_{x_i}$ with finitely many $a_i\in\mathbb{R}$ nonzero. An inner product $\langle \cdot , \cdot \rangle_K$ on this space is defined by
\begin{equation*}
\langle K_{x_i},K_{x_j} \rangle_K := K(x_i,x_j)
\end{equation*}
and extending linearly. One takes the completion to obtain $\mathcal{H}_K$.

Alternatively, an equivalent definition of an RKHS is as a Hilbert space of real-valued functions on $X$ for which the evaluation functional $\delta_x(f):= f(x)$ is continuous for all $x\in X$.

Finite dimensional subspaces of $\mathcal{H}_K$ can also be naturally defined by taking a finite number of points $\mathbf{x}:=\{x_1,\ldots,x_m\}\subset X$ and considering the linear span
\begin{equation*}
\mathcal{H}_{K,\mathbf{x}} := \textrm{sp}\{K_x:x\in\mathbf{x}\}.
\end{equation*}
In practice we will seek functions in these finite dimensional subspaces as approximations for $f^*$.

Within these Hilbert spaces the reproducing property holds:
\begin{equation}
\langle K_x,f\rangle_K = f(x), \qquad \forall f\in\mathcal{H}_K.		\label{eq:reproducing}
\end{equation}
If we denote $\kappa:=\sqrt{\sup_{x\in X} K(x,x)}$, then $\mathcal{H}_K\subset C(X)$ and it follows that
\begin{equation}
||f||_{L^\infty(X)} \le \kappa||f||_K, \qquad \forall f\in\mathcal{H}_K.		\label{eq:supnorminclusion}
\end{equation}

The RKHS $\mathcal{H}_K$ can also be defined by means of an integral operator. Let $\rho$ be any (finite) strictly positive Borel measure on $X$ (e.g. Lebesgue measure) and ${L}^2_\rho(X)$ be the Hilbert space of square integrable functions on $X$. Then define the linear operator $L_K:{L}^2_\rho(X)\rightarrow C(X)$ by
\begin{equation}			\label{eq:LK}
(L_K f)(x) = \int_X K(x,y)f(y)d\rho(y).
\end{equation}
When composed with the inclusion $C(X)\hookrightarrow L^2_\rho(X)$ we obtain an operator from $L^2_\rho(X)$ to $L^2_\rho(X)$, which we also denote by $L_K$.
 $L_K$ is then a self-adjoint compact operator, and  it is also positive if the kernel $K$ is positive definite. Also the map
\begin{equation*}
L_K^{1/2}:{L}^2_\rho(X)\rightarrow \mathcal{H}_K
\end{equation*}
defines an isomorphism of Hilbert spaces. $L_K^{1/2}$ is well defined as an operator on ${L}^2_\rho(X)$ in the sense that $L_K = L_K^{1/2}\circ L_K^{1/2}$.

\subsection{Sobolev space RKHS}			\label{sec:SobolevspaceRKHS}

In this paper we will work with reproducing kernel Hilbert spaces that are Sobolev spaces. Given the open domain $\mathcal{B}\subset \mathbb{R}^d$, for $k\in \mathbb{N}_0$, $1\le p < \infty$, the Sobolev space $W^k_p(\mathcal{B})$ consists of all functions $f$ with weak derivatives $D^\alpha f \in L^p(\mathcal{B})$, $|\alpha| \le k$. We also use the following notation to define the (semi-)norms
\begin{equation*}
|f|_{W^k_p(\mathcal{B})} = \left( \sum_{|\alpha | = k} || D^\alpha f ||^p_{L^p(\mathcal{B})} \right)^{1/p},\quad
||f||_{W^k_p(\mathcal{B})} = \left( \sum_{|\alpha | \le k} || D^\alpha f ||^p_{L^p(\mathcal{B})} \right)^{1/p}.
\end{equation*}
With $p=\infty$ the norms are defined in the natural way:
\begin{equation*}
|f|_{W^k_\infty (\mathcal{B})} = \sup_{\alpha = k} ||D^\alpha f ||_{L^\infty(\mathcal{B})} \quad \text{and}\quad ||f||_{W^k_\infty (\mathcal{B})} = \sup_{\alpha \le k} ||D^\alpha f ||_{L^\infty(\mathcal{B})}.
\end{equation*}
We will also use fractional order Sobolev spaces. For a detailed discussion see e.g. \cite{Ada75}.

The Sobolev embedding theorem states that for $\tau > d/2$, $W^\tau_2(\mathbb{R}^d)$ can be embedded into $C(\mathbb{R}^d)$, and therefore it follows that $W^\tau_2(\mathbb{R}^d)$ is a RKHS, using the fact that the pointwise evaluation functional is then continuous. Several kernel functions are known to generate RKHS spaces that are norm-equivalent to Sobolev spaces \cite{Opf06:a,Opf06:b}. We will choose to work with the Wendland functions \cite{Wen95}. These are positive definite, compactly supported radial basis function kernels that are represented by a univariate polynomial on their support.
\begin{definition}[Wendland function]
Let $l\in\mathbb{N}$, $k\in\mathbb{N}_0$. We define by recursion
\begin{eqnarray*}
\psi_{l,0}(r) &=& (1-r)^l_+\\
\textrm{and }\psi_{l,k+1}(r) &=& \int^1_r t\psi_{l,k}(t)dt
\end{eqnarray*}
for $r\in\mathbb{R}^+_0$. Here, $x_+ = x$ for $x\ge 0$ and $x_+ = 0$ for $x<0$.
\end{definition}

Setting $l:=\lfloor\frac{d}{2}\rfloor + k + 1$, the Wendland functions are characterised by a smoothness index $k\in\mathbb{N}$, and belong to $C^{2k}(\mathbb{R}^d)$. For a domain  $D\subset \mathbb{R}^d$ with a Lipschitz boundary, the Wendland  function kernel is given by $K(x,y):=\psi_{l,k}(c||x-y||_{\mathbb{R}^d})$, $c>0$, for $x,y\in D$. The Wendland function kernel generates an RKHS consisting of the same functions as  the Sobolev space $W^\tau_2(D)$ with $\tau = k + (d+1)/2$, with an equivalent norm \cite[Corollary 10.48]{Wen05}. Therefore the generated Sobolev space is of integer order when $d$ is odd, and integer plus one half when $d$ is even.

From now on we shall use RKHS spaces generated by Wendland function kernels. We will use two such RKHS spaces for the two parts of our algorithm: to approximate the vector field $f^*$ in \eqref{eqn:dynsys} we use the space $\mathcal{H}_{K^{1}}$ defined on $X$, corresponding to the Wendland function kernel $K^{1}$ with smoothness index $k_1$, such
 that $\tau_1 = k_1 + (d+1)/2$ with $\lceil \tau_1 \rceil = \nu_1$. Then $\mathcal{H}_{K^1}$ is norm-equivalent to $W^{\tau_1}_2(X)$. In this case, when $d$ is odd we have that $\lceil\tau_1\rceil=\tau_1$, and the assumption that $\nu_1 \ge (3d+7)/2$  implies that $k_1 \ge d+3$. When $d$ is even, $\lceil\tau_1\rceil = \tau_1 + 1/2$, and the assumption that $\nu_1\ge (3d+12)/2$  gives  $k_1 \ge d+5$ (cf. Theorem \ref{thm:mainresult}).

In the second part of our algorithm we approximate the Lyapunov function. For this, we use the RKHS space $\mathcal{H}_{K^{2}}$ defined on $\Omega_V$ (resp. $\Omega_T$) corresponding to the Wendland function kernel $K^{2}$ with smoothness index $k_2$, such that $k_2 = k_1 - (d+2)$ if $d$ is odd, or $k_2 = k_1 - (d+3)
$ if $d$ is even. Correspondingly, this implies that $k_2 \ge 1$ when $d$ is odd, and $k_2 \ge 2$ when $d$ is even. Here, $\mathcal{H}_{K^2}$ is norm-equivalent to $W^{\tau_2}_2(\Omega_V)$ (resp.  $W^{\tau_2}_2(\Omega_T)$), where $\tau_2:= k_2 + (d+1)/2$.



These function spaces consist of smooth functions as a consequence of the following generalised Sobolev inequality (for a proof, see e.g. \cite[Chapter 5.7, Theorem 6]{Eva98}).

\begin{lemma}					\label{lem:genSobolev}
Let $\mathcal{B}\subset\mathbb{R}^d$ be a bounded open set with $C^1$ boundary. For $u\in W^{m}_2(\mathcal{B})$ where $m > d/2$, we have
\begin{equation}				\label{eq:genSobolev}
|| u ||_{C^{m-\lfloor{\frac{d}{2}}\rfloor - 1,\gamma}(\mathcal{B})} \le C ||u||_{W^{m}_2(\mathcal{B})},
\end{equation}
where $\gamma = \frac{1}{2}$ if $d$ is odd, and $\gamma$ is any element in $(0,1)$ if $d$ is even.
\end{lemma}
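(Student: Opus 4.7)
The plan is to execute the classical bootstrapping argument underlying the Sobolev embedding theorem, with the parity of $d$ distinguishing the two branches of the conclusion.

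First I would invoke the Sobolev extension theorem, available because $\mathcal{B}$ has a $C^1$ boundary, to produce an extension $\tilde u \in W^m_2(\mathbb{R}^d)$ with $\|\tilde u\|_{W^m_2(\mathbb{R}^d)} \le C\|u\|_{W^m_2(\mathcal{B})}$. This lifts the problem to the whole space, absorbs all boundary technicalities into a single constant, and allows the resulting pointwise estimate to be restricted back to $\mathcal{B}$ at the end.

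The core step is to iterate the Gagliardo--Nirenberg--Sobolev inequality $\|v\|_{L^{p^*}(\mathbb{R}^d)} \le C \|\nabla v\|_{L^p(\mathbb{R}^d)}$ with $p^* = dp/(d-p)$ (valid for $p < d$), applied successively to derivatives of $\tilde u$. Starting from $p_0 = 2$ this generates the sequence $p_j = 2d/(d-2j)$, so after $j$ iterations one has $W^m_2(\mathbb{R}^d) \hookrightarrow W^{m-j}_{p_j}(\mathbb{R}^d)$. For $d$ odd, taking $j = (d-1)/2 = \lfloor d/2 \rfloor$ yields $p_j = 2d > d$, and since $m > d/2$ at least one derivative remains, so one closes with Morrey's inequality $W^1_{2d} \hookrightarrow C^{0,1/2}$ to land in $C^{m-\lfloor d/2\rfloor - 1,\,1/2}$.

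For $d$ even the same iteration stalls at the borderline $p_{(d-2)/2} = d$, where Morrey does not apply directly. Here I would first use the endpoint embedding $W^1_d(\mathcal{B}) \hookrightarrow L^q(\mathcal{B})$ valid on the bounded domain $\mathcal{B}$ for every finite $q$, then choose $q > d$ and apply Morrey once more to obtain Hölder exponent $1 - d/q$, which can be made an arbitrary element of $(0,1)$ by the choice of $q$. Tracking the derivative count in each step places $u$ in $C^{m - d/2 - 1,\gamma}(\mathcal{B}) = C^{m - \lfloor d/2\rfloor - 1,\gamma}(\mathcal{B})$.

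To conclude, one applies the chain of embeddings produced above to every partial derivative $D^\alpha u$ with $|\alpha| \le m - \lfloor d/2\rfloor - 1$ and sums, yielding the advertised estimate in the $C^{m-\lfloor d/2\rfloor - 1,\gamma}(\mathcal{B})$ norm with constant $C$ depending only on $\mathcal{B}$, $m$, $d$ and (in the even case) the chosen $\gamma$. The only genuinely delicate point is the even-dimensional case: the borderline exponent $p = d$ blocks a direct application of Morrey, and the detour through the $L^q$ endpoint embedding is precisely what forces $\gamma$ to be merely an arbitrary member of $(0,1)$ rather than a fixed value like $1/2$.
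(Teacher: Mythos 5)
The paper does not prove this lemma; it cites it directly to Evans, \emph{Partial Differential Equations}, Chapter 5.7, Theorem 6, and your argument — extension to $\mathbb{R}^d$, iterated Gagliardo--Nirenberg--Sobolev until the integrability exponent reaches $2d$ (odd $d$) or the borderline $d$ (even $d$), then Morrey's inequality, with the even case taking a detour through the endpoint embedding $W^1_d\hookrightarrow L^q$ for all finite $q$ — is precisely the proof Evans gives there, specialised to $p=2$. Your derivation is correct (including the parity bookkeeping that yields $\gamma=1/2$ for odd $d$ and arbitrary $\gamma\in(0,1)$ for even $d$); the only cosmetic looseness is that in the even case the step after $W^2_d$ should be phrased as $W^2_d\hookrightarrow W^1_q$ (via the endpoint embedding applied to first derivatives) before Morrey is invoked, rather than going directly from an $L^q$ bound to a Hölder bound.
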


\begin{corollary}				\label{cor:smoothnessK1K2}
For $f\in\mathcal{H}_{K^1}$ and $g\in\mathcal{H}_{K^2}$, we have
\begin{eqnarray}
|| f ||_{C^{k_1}(X)} &\le& C || f ||_{K^1},	\quad (\textrm{when }d\textrm{ is odd}),	\label{eqn:smoothnessK1:a}\\
|| f ||_{C^{k_1-1}(X)} &\le& C || f ||_{K^1},	\quad (\textrm{when }d\textrm{ is even}),	\label{eqn:smoothnessK1:b}\\
|| g ||_{C^{1}(\Omega_V/\Omega_T)} &\le& C || g ||_{K^2}.		\label{eqn:smoothnessK2}
\end{eqnarray}
\end{corollary}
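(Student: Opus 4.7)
The plan is straightforward: chain the norm-equivalence $\|\cdot\|_{K^j}\asymp\|\cdot\|_{W_2^{\tau_j}}$ of the Wendland-kernel RKHS (recorded in \S\ref{sec:SobolevspaceRKHS}) with the generalised Sobolev inequality of Lemma~\ref{lem:genSobolev}. The only real work is to track, in each parity of $d$, which integer-order H\"older space Lemma~\ref{lem:genSobolev} delivers and to verify that it sits above the claimed $C^{k}$ regularity.

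For $f\in\mathcal{H}_{K^1}$ with $d$ odd, $\tau_1 = k_1 + (d+1)/2$ is an integer larger than $d/2$, so Lemma~\ref{lem:genSobolev} applies directly with $m=\tau_1$. Using $\lfloor d/2\rfloor = (d-1)/2$, the output is control in $C^{k_1,1/2}(X)$, from which \eqref{eqn:smoothnessK1:a} follows after invoking the norm-equivalence. When $d$ is even, $\tau_1$ is a half-integer, so I first pass through the continuous embedding $W_2^{\tau_1}(X)\hookrightarrow W_2^{m}(X)$ with $m:=\lfloor\tau_1\rfloor = k_1 + d/2$ (still an integer exceeding $d/2$), and then apply Lemma~\ref{lem:genSobolev}: the recovered regularity is $C^{k_1-1,\gamma}(X)$, yielding \eqref{eqn:smoothnessK1:b}.

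For $g\in\mathcal{H}_{K^2}$ I would run the identical computation on $\Omega_V$ or $\Omega_T$ (treated as bounded open sets with $C^1$ boundary, so that both Lemma~\ref{lem:genSobolev} and the Wendland norm-equivalence apply) with $(k_2,\tau_2)$ in place of $(k_1,\tau_1)$. In the odd-$d$ case this produces a $C^{k_2}$ bound; since the standing hypothesis gives $k_2\ge 1$, restricting attention to derivatives of order at most one yields \eqref{eqn:smoothnessK2}. In the even-$d$ case the analogous computation produces a $C^{k_2-1}$ bound, and the standing hypothesis $k_2\ge 2$ again delivers the $C^1$ estimate.

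There is no genuinely hard step: the argument is pure index bookkeeping. The only subtlety worth flagging is the parity split -- in the even-$d$ case one loses a derivative when rounding $\tau_j$ down to an integer before invoking Lemma~\ref{lem:genSobolev}, which is precisely why the paper imposes the stronger hypotheses $\nu_1\ge(3d+12)/2$ (equivalently $k_1\ge d+5$) and $k_2\ge 2$ under that parity.
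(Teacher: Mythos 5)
Your proposal is correct and follows the same basic strategy as the paper: chain the Wendland norm-equivalence $\|\cdot\|_{K^j}\asymp\|\cdot\|_{W_2^{\tau_j}}$ with the generalised Sobolev inequality of Lemma~\ref{lem:genSobolev}, then track the resulting H\"older index by parity. The one material difference is in how the Sobolev inequality is accessed: the paper first invokes a bounded extension operator $E\colon W_2^{\tau_1}(X)\to W_2^{\tau_1}(\mathbb{R}^d)$ (following \cite{NarWarWen04}) and applies the embedding on $\mathbb{R}^d$, whereas you apply Lemma~\ref{lem:genSobolev} directly on the bounded domain. Your handling of even $d$ — passing through $W_2^{\tau_1}\hookrightarrow W_2^{\lfloor\tau_1\rfloor}$ before applying the lemma — is actually cleaner than the paper's computation, since Lemma~\ref{lem:genSobolev} needs an integer Sobolev order to make $C^{m-\lfloor d/2\rfloor-1,\gamma}$ well-posed, a point the paper's displayed chain (which keeps the half-integer $\tau_1$) glosses over. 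The only thing to flag on your side is regularity of the domain for \eqref{eqn:smoothnessK2}: $\Omega_V,\Omega_T$ are introduced in the paper only as Lipschitz domains, not $C^1$, which is exactly why the paper routes the argument through the extension operator rather than applying Lemma~\ref{lem:genSobolev} (stated for $C^1$ boundary) in place. Your version would need either the extension step for those sets, or an appeal to the Morrey/Sobolev embedding for Lipschitz (cone-condition) domains, to close that gap; as it stands you assume a slightly stronger boundary regularity than the text provides.
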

\begin{proof}
Following the arguments given in \cite{NarWarWen04}, there exists a bounded extension operator $E$ that extends $f\in W_2^{\tau_1}(X)$ to $Ef\in W_2^{\tau_1}(\mathbb{R}^d)$ such that $Eu = u$ on $X$. Then, from Lemma \ref{lem:genSobolev} we have
\begin{eqnarray*}
||f||_{C^{\tau_1-\lfloor{\frac{d}{2}}\rfloor - 1,\gamma}(X)} &=& ||Ef||_{C^{\tau_1-\lfloor{\frac{d}{2}}\rfloor - 1,\gamma}(X)}
 \le  || Ef ||_{C^{\tau_1-\lfloor{\frac{d}{2}}\rfloor - 1,\gamma}(\mathbb{R}^d)}\\
& \le & \tilde{C} || Ef ||_{W^{\tau_1}_2(\mathbb{R}^d)}
 \le  C || f ||_{W^{\tau_1}_2(X)}.
\end{eqnarray*}
Then \eqref{eqn:smoothnessK1:a} and \eqref{eqn:smoothnessK1:b} follow from using the norm equivalence of $\mathcal{H}_{K^1}$  to $W^{\tau_1}_2(X)$, and that $\tau_1 = k_1 + (d+1)/2$. Inequality \eqref{eqn:smoothnessK2} follows similarly, also using  $k_2 \ge 1$ when $d$ is odd, and $k_2 \ge 2$ when $d$ is even.
\end{proof}



\subsection{Generalised interpolant and approximation theorems}

In this section we introduce the generalised interpolant that is used to approximate the Lyapunov functions $V$ and $T$.
Consider a  general interpolation setting where $\Omega\subset\mathbb{R}^d$ is a bounded domain having a Lipschitz boundary. Let  $L$ be a linear differential operator and $\mathbf{q}:=\{q_1,\ldots,q_M\}\subset\Omega$ be a set of pairwise distinct points which do not contain any singular points of $L$. (A point $q\in\mathbb{R}^d$ is a singular point of $L$ if $\delta_q\circ L = 0$, see also \cite{GieWen07}.) We define linear functionals
\begin{equation*}
\lambda^j(u) := \delta_{q_j}\circ L(u) = Lu(q_j).
\end{equation*}

\begin{definition}[Generalized interpolant]			\label{def:genint}
Suppose we have values $Lu(q_i) = \beta_i$ for $i=1,\ldots,M$ for a function $u:\Omega\rightarrow\mathbb{R}$.
Given a sufficiently smooth kernel $K(\cdot,\cdot)$, define the generalised interpolant as
\begin{equation*}
s = \sum_{j=1}^M \alpha_j(\delta_{q_j} \circ L)^yK(\cdot,y),
\end{equation*}
where $(\delta_{q_j} \circ L)^y$ is the linear function applied to one argument of the kernel. The coefficient vector $\alpha$ is the solution of $A\alpha = \beta = (\beta_i)$ with the interpolation matrix $A \in\mathbb{R}^{M\times M}$  given by
\begin{equation*}
(A)_{ij} = (\delta_{q_i}\circ L)^x(\delta_{q_j}\circ L)^yK(x,y).
\end{equation*}
\end{definition}
\begin{remark}		\label{rem:sgenint}
According to the above definition, we have $Ls(q_i)=\beta_i$. In addition, it can be shown that the generalised interpolant above is the unique norm-minimal  interpolant in $\mathcal{H}_K$, see \cite{Gie07:a,GieWen07}. The matrix $A$ is guaranteed to be invertible due to our  choice of the Wendland function kernel $K$ \cite[Section 3.2.2]{Gie07:a}, and since $\mathbf{q}$ does not contain any singular points.
\end{remark}

We conclude this section by citing  the following theorem from \cite{GieWen07}, which provides convergence estimates for approximating  Lyapunov functions $V_g$ (resp. $T_g$) with the generalised interpolants $s_1$ (resp. $s_2$) as above, for a known dynamical system $\dot{x} = g(x)$.
\begin{theorem}				\label{thm:GieWen}
Let $\nu_2:= \lceil \tau_2 \rceil$ with $ \tau_2 = k_2 +  (d+1)/2$, where $k_2$ is the smoothness index of the compactly supported Wendland function. Let $k_2>1/2$ if $d$ is odd or $k_2>1$ if $d$ is even. Consider the dynamical system defined by the ordinary differential equation $\dot{x} = g(x)$, where $g\in C^{\nu_2}(\mathbb{R}^d,\mathbb{R}^d)$. Let $\overline{x}\in\mathbb{R}^d$ be an equilibrium such that the real parts of all eigenvalues of $Dg(\overline{x})$ are negative, and suppose $g$ is bounded in $A(\overline{x})$.

Let $\Gamma$ be a non-characteristic hypersurface as in Definition \ref{def:noncharhyp}, with $h\in C^{\nu_2}(\mathbb{R}^d,\mathbb{R})$ and  $\xi_T\in C^{\nu_2}(\Gamma,\mathbb{R})$. Let $V_g\in W^{\tau_2}_2(A(\overline{x}),\mathbb{R})$, $T_g\in W_2^{\tau_2}(A(\overline{x}) \backslash \{\overline{x}\},\mathbb{R})$ be the Lyapunov functions of Theorems \ref{thm:VLyapunovconverse} and \ref{thm:TLyapunovconverse} for the system  $\dot{x} = g(x)$, with $T_g(x)=\xi_T(x)$ for $x\in\Gamma$.

 Given pairwise distinct sets of points $\mathbf{q}:=(q_i)_{i=1}^M$ in $A(\overline{x})$ (not containing $\overline{x}$) and $\tilde{\mathbf{q}}:=(q_i)_{i=M+1}^{M+N}$ in $\Gamma$,
 and let $\Omega\subset A(\overline{x})$ be a bounded domain with Lipschitz boundary, with $\mathbf{q}\subset \Omega$.
 Let $s_1$ and $s_2$ be the generalised interpolants satisfying
\begin{eqnarray*}
\lambda_g^i(s_1)& := &\langle\nabla s_1(q_i), g(q_i)\rangle_{\mathbb{R}^d} = -p(q_i),\qquad i=1,\ldots, M,\\
\lambda_g^i(s_2)& := &\langle\nabla s_1(q_i), g(q_i)\rangle_{\mathbb{R}^d} = -\overline{c}, \qquad i=1,\ldots, M,\\
\lambda^{i,0}(s_2)& := & s_2(q_i) = \xi_T(q_i), \qquad i=M+1,\ldots, M+N,
\end{eqnarray*}
and let the fill distance of the set of points $\mathbf{q}$ in
$\Omega$
and $\tilde{\mathbf{q}}$ in $\Gamma$ be $h_{\mathbf{q}}$ and $h_{\mathbf{\tilde{q}}}$ respectively. Then for $h_{\mathbf{q}}, h_{\mathbf{\tilde{q}}}$ sufficiently small, the following estimates hold:
\begin{eqnarray}
|| \langle\nabla s_1, g\rangle_{\mathbb{R}^d} - \langle\nabla V_g,g\rangle_{\mathbb{R}^d}  ||_{L^\infty(\Omega)} & \le & Ch_{\mathbf{q}}^{k-\frac{1}{2}}||V_g||_{W_2^{\tau_2}(\Omega)},\\
||\langle\nabla s_2, g\rangle_{\mathbb{R}^d} - \langle\nabla T_g,g\rangle_{\mathbb{R}^d} ||_{L^\infty(\Omega)} & \le & Ch_{\mathbf{q}}^{k-\frac{1}{2}}||T_g||_{W_2^{\tau_2}(\Omega)},\\
||s_2 - T_g||_{L^\infty(\Gamma)} & \le & Ch_{\tilde{\mathbf{q}}}^{k+\frac{1}{2}}||T_g||_{W_2^{\tau_2}(\Omega)}.	\label{eqn:ests2gamma}
\end{eqnarray}
\end{theorem}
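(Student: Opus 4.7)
The plan is to follow the standard RKHS interpolation strategy: combine the norm-minimality of the generalised interpolant with a Narcowich--Ward--Wendland sampling inequality and the norm-equivalence between $\mathcal{H}_{K^2}$ and $W_2^{\tau_2}$. All three estimates share the same skeleton; only the functional whose zero set we exploit, and the order of derivatives lost, change from one case to another.

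First, for the two orbital-derivative bounds, I set $e_1 := s_1 - V_g$ and $e_2 := s_2 - T_g$, and observe that $\langle \nabla e_j, g \rangle$ vanishes on $\mathbf{q}$, since $s_j$ was constructed to match the prescribed orbital derivative at every $q_i$. I would then apply a Sobolev-type sampling inequality of the form
\begin{equation*}
\|u\|_{L^\infty(\Omega)} \le C\bigl( h_{\mathbf{q}}^{k_2-\tfrac12}\, \|u\|_{W_2^{\tau_2-1}(\Omega)} + \|u|_{\mathbf{q}}\|_{\ell^\infty}\bigr)
\end{equation*}
(valid because $\tau_2 - 1 > d/2$ under the smoothness hypothesis on $k_2$), to $u = \langle \nabla e_j, g\rangle$. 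Since $u$ vanishes on $\mathbf{q}$, only the first term survives. Next, because $g \in C^{\nu_2}$ and $L_g := \langle \nabla \cdot , g\rangle$ is a first-order operator with smooth coefficients, it maps $W_2^{\tau_2}$ continuously into $W_2^{\tau_2-1}$ on bounded $\Omega$, so $\|L_g e_j\|_{W_2^{\tau_2-1}(\Omega)} \le C \|e_j\|_{W_2^{\tau_2}(\Omega)}$. Finally, the generalised interpolant is by construction the norm-minimal element of $\mathcal{H}_{K^2}$ satisfying the interpolation conditions (see Remark \ref{rem:sgenint}), and $V_g$, $T_g$ satisfy those same conditions; hence $\|s_j\|_{K^2} \le \|V_g\|_{K^2}$ (resp.~$\|T_g\|_{K^2}$), so $\|e_j\|_{K^2} \le 2\|V_g\|_{K^2}$. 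The norm-equivalence of $\mathcal{H}_{K^2}$ with $W_2^{\tau_2}(\Omega)$ then converts this into an $\|e_j\|_{W_2^{\tau_2}(\Omega)} \le C\|V_g\|_{W_2^{\tau_2}(\Omega)}$ bound, and concatenating these three inequalities yields the claimed $h_{\mathbf{q}}^{k_2-1/2}$ rate.

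For the pointwise bound \eqref{eqn:ests2gamma} on $\Gamma$, the argument is the same but without the loss of one derivative: $s_2 - T_g$ itself vanishes at the $q_i \in \tilde{\mathbf{q}} \subset \Gamma$, so I apply the sampling inequality directly to $s_2 - T_g$, but this time on the hypersurface $\Gamma$ of dimension $d-1$. Because $\Gamma$ is compact with $C^{\nu_2}$ structure, the corresponding trace-version sampling inequality on $\Gamma$ gives the improved exponent $h_{\tilde{\mathbf{q}}}^{k_2+1/2}$ (one whole extra half-order compared with the orbital-derivative estimates, reflecting that we use the function itself rather than a first derivative of it). The Sobolev norm on the right-hand side still comes from the norm-minimality of $s_2$ and the norm-equivalence with $W_2^{\tau_2}$ on $\Omega$, bounded via a trace inequality if necessary.

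The main technical obstacle is the generalised-interpolant step for $s_2$: the interpolation conditions are mixed, consisting of orbital-derivative values on $\mathbf{q}$ and function values on $\tilde{\mathbf{q}} \subset \Gamma$. One must verify that the combined functional system $\{\lambda_g^i\}_{i=1}^M \cup \{\lambda^{i,0}\}_{i=M+1}^{M+N}$ is linearly independent on $\mathcal{H}_{K^2}$ so that the interpolation matrix $A$ is invertible, which is where the non-characteristic property of $\Gamma$ and the absence of singular points of $L_g$ in $\mathbf{q}$ enter. Apart from this, the rest is bookkeeping: tracking that the smoothness constraint $k_2 > 1/2$ ($d$ odd) or $k_2 > 1$ ($d$ even) is exactly what is needed so that $\tau_2-1 > d/2$ for the sampling inequality, and that the constants absorb the uniform bound on $g$ on the compact set where the analysis takes place.
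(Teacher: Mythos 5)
This theorem is not proved in the paper; it is cited verbatim from \cite{GieWen07} (the paper even prefaces it with ``We conclude this section by citing the following theorem from \cite{GieWen07}''). So there is no internal proof to compare against. That said, your sketch correctly reproduces the standard structure of the argument in that reference: norm-minimality of the generalised interpolant in $\mathcal{H}_{K^2}$ (so $\|s_j\|_{K^2}\le\|V_g\|_{K^2}$ or $\|T_g\|_{K^2}$, giving $\|e_j\|_{W_2^{\tau_2}(\Omega)}\le C\|V_g\|_{W_2^{\tau_2}(\Omega)}$ via norm-equivalence), combined with a Narcowich--Ward--Wendland sampling inequality applied to the quantity that vanishes on the data sites. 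Your exponent bookkeeping is right: applying the sampling estimate to $u=L_g e_j\in W_2^{\tau_2-1}(\Omega)$ gives $h_{\mathbf{q}}^{(\tau_2-1)-d/2}=h_{\mathbf{q}}^{k_2-1/2}$, and working on the $(d-1)$-dimensional trace $T_g|_\Gamma\in W_2^{\tau_2-1/2}(\Gamma)$ gives $h_{\tilde{\mathbf{q}}}^{(\tau_2-1/2)-(d-1)/2}=h_{\tilde{\mathbf{q}}}^{k_2+1/2}$. Your identification of the mixed-functional solvability (linear independence of $\{\lambda_g^i\}\cup\{\lambda^{i,0}\}$, resting on $\Gamma$ being non-characteristic and $\mathbf{q}$ avoiding singular points of $L_g$) as the main technical prerequisite also matches what Remark~\ref{rem:sgenint} and \cite[Section 3.2.2]{Gie07:a} handle. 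The only small inaccuracy is a side remark: $\tau_2-1>d/2$ needs only $k_2>1/2$ in both parities; the stronger requirement $k_2>1$ for $d$ even comes from the integer-index hypothesis in the sampling inequality (cf.\ Theorem~\ref{thm:WenRie}), not from the Sobolev embedding threshold. This does not affect the validity of your outline.
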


\section{The Algorithm}			\label{sec:algorithm}

Here we present the algorithm for which the estimate given in Theorem \ref{thm:mainresult} holds. The algorithm is actually split into two parts. The first part computes $f_{\mathbf{z},\lambda}$ as an approximation to $f^*$ (Algorithm \ref{alg:approxf}), and the second part computes  $\hat{V}$ or $\hat{T}$ as an approximation to the Lyapunov functions $V$ or $T$ respectively given in Theorems \ref{thm:VLyapunovconverse} and \ref{thm:TLyapunovconverse} (Algorithm \ref{alg:approxV} or \ref{alg:approxT}). As discussed in \S\ref{sec:SobolevspaceRKHS}, we will use two RKHS spaces $\mathcal{H}_{K^{1}}$ and $\mathcal{H}_{K^{2}}$ for the two parts of the algorithm, corresponding to the Wendland function kernels $K^{1}$ and $K^{2}$ with smoothness indices $k_1$ and $k_2$ respectively. We recall that the smoothness indices are chosen such that $\tau_1 = k_1 + (d+1)/2$ with $\lceil\tau_1\rceil = \nu_1$, and $k_2 = k_1 - (d+3)/2$ if $d$ is odd, or $k_2 = k_1 - (d+4)/2$ if $d$ is even.


Recall that our sampled data values $\mathbf{z}:=(x_i, y_i)_{i=1}^m \in (X\times \mathbb{R}^d)^m$ take the form  $y_i = f^*(x_i) + \eta_{x_i}$, with $\eta_x \in \mathbb{R}^d$ a random variable drawn from a probability distribution with zero mean  and variance $\sigma_x^2$.

Our approximation scheme for $f^*$ employs a regularised least squares algorithm (see e.g. \cite{EvgPonPog00} and its references) to approximate each component $f^{*,k}$, $k=1,\ldots,d$. We also introduce a weighting $\mathbf{w}=\{w_{x_i}\}_{i=1}^m$ corresponding to the Voronoi tessellation associated with the points $\{x_i\}_{i=1}^m$ \cite{Vor08}.

\begin{definition}[Voronoi tessellation]			\label{def:Voronoi}
Let $X\subset\mathbb{R}^d$ be compact. For a set of pairwise distinct points $\mathbf{x} := \{x_i\}_{i=1}^m \in X^m$, the Voronoi tessellation is the collection of pairwise disjoint open sets $\mathcal{V}_i(\mathbf{x})$, $i=1,\ldots,m$ defined by
\begin{equation*}
\mathcal{V}_i(\mathbf{x}) = \{y\in X \, | \, ||x_i - y||_{\mathbb{R}^d} < ||x_j - y||_{\mathbb{R}^d},\, \text{if } i\ne j\}.
\end{equation*}
\end{definition}
The weighting $\mathbf{w}=\{w_{x_i}\}_{i=1}^m$ is then defined by $w_{x_i} = \rho(\mathcal{V}_i(\mathbf{x}))$, where $\rho$ is the strictly positive Borel measure from \S\ref{sec:RKHS}.

\begin{algorithm}		\label{alg:approxf}
Fix a regularisation parameter $\lambda >0 $, and define $D_w \in\mathbb{R}^{m\times m}$ as the diagonal matrix with diagonal elements $w_{x_i}$, $i=1,\ldots,m$. The approximation $f_{\mathbf{z},\lambda}$ for $f^*$ is constructed component-wise. That is, for each $k = 1,\ldots,d$, we approximate the $k$-th component $f^{*,k}$ by $f^k_{\mathbf{z},\lambda}\in\mathcal{H}_{K^1}$, defined by
%
 $f_{\mathbf{z},\lambda}^k=\sum_{i=1}^m a_i K^1_{x_i}$, where the coefficients $a:=\{a_i\}_{i=1}^m$ may be calculated as the solution to the matrix equation
\begin{equation*}
 (A_{\mathbf{x}}D_w A_{\mathbf{x}} + \lambda A_{\mathbf{x}})a = A_{\mathbf{x}} D_w y^k.
\end{equation*}
Here  $y^k = (y_i^k)_{i=1}^m$ where $y_i^k$ is simply the $k$-th component of $y_i\in\mathbb{R}^d$, and $A_{\mathbf{x}} \in \mathbb{R}^{m\times m}$ is a symmetric matrix defined by $(A_{\mathbf{x}} )_{i,j} = K^1(x_i,x_j)$.
 \end{algorithm}

We note here that due to our choice of RKHS the matrix $A_\mathbf{x}$ is positive definite \cite[Proposition 3.3]{GieWen07}, and therefore the matrix $(A_{\mathbf{x}}D_w A_{\mathbf{x}} + \lambda A_{\mathbf{x}})$ is invertible, as it is the sum of two positive definite matrices. The error in our approximation of $f^*$ by $f_{\mathbf{z},\lambda}$ is studied in \S\ref{sec:errorf}. We will show that this error may be bounded in the supremum norm on the domain $X$, depending on the density of the data in $X$ and the choice of regularisation parameter $\lambda$.

Once we have our approximation $f_{\mathbf{z},\lambda}$, then we construct our Lyapunov function approximation with  the generalised interpolant as in  Theorem \ref{thm:GieWen}, where we set $g = f_{\mathbf{z},\lambda}$. 

Therefore we have the following Algorithm  \ref{alg:approxV} for $\hat{V}$, or Algorithm \ref{alg:approxT} for $\hat{T}$. These algorithms involve sampling our approximation $f_{\mathbf{z},\lambda}$ at a discrete set of points.

\begin{subtheorem}{algorithm}
\begin{algorithm}[Approximation of $V$]		\label{alg:approxV}
First run Algorithm \ref{alg:approxf} on the sampled data set $(x_i,y_i)_{i=1}^m$ to compute $f_{\mathbf{z},\lambda}\in(\mathcal{H}_{K^1})^d$.

Define a set of pairwise distinct points $\mathbf{q}:= (q_i)_{i=1}^M$, with $f_{\mathbf{z},\lambda}(q_i)\ne 0$ for all $i=1,\ldots,M$. The approximation $\hat{V}\in\mathcal{H}_{K^2}$ for the Lyapunov function $V$  (see Theorem \ref{thm:VLyapunovconverse}) is
%
given by $\hat{V}= \sum_{i=1}^M b_i \lambda_{f_{\mathbf{z},\lambda}}^{i,y} K^2(\cdot, y)$, where $\lambda_{f_{\mathbf{z},\lambda}}^{i,y}$ is the linear functional $\lambda_{f_{\mathbf{z},\lambda}}^i$ applied to one argument of the kernel. The coefficients $b:=\{b_i\}_{i=1}^M$ are given by
\begin{equation*}
B_\mathbf{q} b = - p.
\end{equation*}
Here $B_\mathbf{q} \in \mathbb{R}^{M\times M}$ is a symmetric matrix defined by $(B_\mathbf{q})_{i,j} =\lambda_{f_{\mathbf{z},\lambda}}^{i,x}\lambda_{f_{\mathbf{z},\lambda}}^{j,y} K^2(x,y)$ and $p = (p(q_i))_{i=1}^M$.
\end{algorithm}

As before, the choice of RKHS guarantees that the matrix $B_\mathbf{q}$ will be positive definite, provided that $f_{\mathbf{z},\lambda}(q_i) \ne 0$ for all $i=1,\ldots, M$.

To approximate $T$, we  assume that a non-characteristic hypersurface for $f^*$ has been defined as $\Gamma = \{x\in A(\overline{x})\backslash\{\overline{x}\} \mid h(x) = 0\}$ according to Definition \ref{def:noncharhyp}, for which $T(x) = \xi_T(x)$ on $\Gamma$, $\xi_T\in C^{\nu_1}(\Gamma,\mathbb{R})$.

\begin{algorithm}[Approximation of $T$]		\label{alg:approxT}
First run Algorithm \ref{alg:approxf} on the sampled data set $(x_i,y_i)_{i=1}^m$ to compute $f_{\mathbf{z},\lambda}\in(\mathcal{H}_{K^1})^d$.

Define a set of pairwise distinct points $\mathbf{q}:= (q_i)_{i=1}^{M+N}$, with $f_{\mathbf{z},\lambda}(q_i)\ne 0$ for all $i=1,\ldots,M$, and $q_i\in\Gamma$ for $i=M+1,\ldots,M+N$.
The approximation $\hat{T}\in\mathcal{H}_{K^2}$ for the Lyapunov function $T$ (see Theorem \ref{thm:TLyapunovconverse}) is given by $\hat{T} = \sum_{i=1}^M c_i\lambda_{f_{\mathbf{z},\lambda}}^{i,y}K^2(\cdot,y) + \sum_{i=M+1}^{M+N} c_i K^2(\cdot,q_i)$, where the coefficients $c:=\{c_i\}_{i=1}^{M+N}$ are computed as the solution to the matrix equation
\begin{equation*}
C_{\mathbf{q},\tilde{\mathbf{q}}}c = \beta,\quad\text{where }C_{\mathbf{q},\tilde{\mathbf{q}}}:=
\left(\begin{array}{cc}
C & D\\
D^T & C^0
\end{array}\right) \in \mathbb{R}^{(M+N)\times(M+N)},
\end{equation*}
and the submatrices $C \in\mathbb{R}^{M\times M}$, $D\in \mathbb{R}^{M\times N}$ and $C^0\in\mathbb{R}^{N\times N}$ have elements defined by $(C)_{i,j} = \lambda_{f_{\mathbf{z},\lambda}}^{i,x}\lambda_{f_{\mathbf{z},\lambda}}^{j,y}K^2(x,y)$, $(D)_{i,j-N} = \lambda_{f_{\mathbf{z},\lambda}}^{i,y}K^2(q_j,y)$ and $(C^0)_{i-N,j-N} = K^2(q_i,q_j)$. The vector $\beta$ is given by $\beta_i = -\overline{c}$, $i=1,\ldots,M$ and $\beta_i = \xi_T(q_i)$, $i=M+1,\ldots,M+N$.
\end{algorithm}
\end{subtheorem}

 It may again be shown that the matrix $C_{\mathbf{q},\tilde{\mathbf{q}}}$ will be positive definite, providing that $f_{\mathbf{z},\lambda}(q_i) \ne 0$ for all $i=1,\ldots,M$, for details see \cite[Section 3.2.2]{Gie07:a}.

 Our error in the approximations $\hat{V}$ and $\hat{T}$ will depend primarily on the error induced by Algorithm \ref{alg:approxf}, which in turn depends on the density of the data, as well as the regularisation parameter. In addition, there will be an error due to the discrete sampling of $f_{\mathbf{z},\lambda}$ in Algorithms \ref{alg:approxV} and \ref{alg:approxT}. This error will depend on the density of the sample points $\mathbf{q}$ (chosen by the user), which in principle can be entirely independent of the original set of points $\mathbf{x}$ provided by the data. The overall error is the subject of \S\ref{sec:proofofmainthm}, which will prove the estimate given in Theorem \ref{thm:mainresult}.


 \section{Error estimate for $ ||f^k_{\mathbf{z},\lambda} - f^{*,k}||_{L^\infty(X)}$}		\label{sec:errorf}

In this section we estimate the error $||f^k_{\mathbf{z},\lambda} - f^{*,k}||_{L^\infty(X)}$ for each $k=1,\ldots,d$.

We have sampled data of the form $(x_i, y_i)$ in $X\times \mathbb{R}^d$, $i=1,\dots,m$, with $y_i = f^*(x_i) + \eta_{x_i}$. We assume that the one-dimensional random variables $\eta_{x_i}^k \in \mathbb{R}^d$, where $i=1,\dots,m$ and $k=1,\dots,d$, are independent random variables drawn from a probability distribution with zero mean and variance $(\sigma_{x_i}^k)^2$ bounded by $\sigma^2$.

In order to ease notation, and since each $f^k_{\mathbf{z},\lambda}$ is calculated independently for each $k$,  we shall henceforth drop the superscript $k$ and consider the data to be of the form $\mathbf{z}:=(x_i, y_i)_{i=1}^m \in (X\times \mathbb{R})^m$.
Note that in this section we shall only be working with the RKHS $\mathcal{H}_{K^1}$.


The following operator definition will enable convenient function representations (c.f. \cite{SmaZho04,SmaZho05}).

\begin{definition}[Sampling Operator]
Given a set $\mathbf{x}:=(x_i)_{i=1}^m$ of pairwise distinct points in $\mathbb{R}^d$, the sampling operator $S_{\mathbf{x}}:\mathcal{H}_{K^1}\rightarrow \mathbb{R}^m$ is defined as
\begin{equation*}
S_{\mathbf{x}}(f) = (f(x_i))_{i=1}^m.
\end{equation*}
\end{definition}
The adjoint operator $S_{\mathbf{x}}^*$ can also be derived as follows. Let $c\in \mathbb{R}^m$, then we have
\begin{equation*}
\left\langle f,S^*_{\mathbf{x}}c\right\rangle_{K^1} = \left\langle S_{\mathbf{x}} f,c\right\rangle_{\mathbb{R}^m} = \sum_{i=1}^m c_if(x_i) = \left\langle f, \sum_{i=1}^m c_i K^1_{x_i}\right\rangle_{K^1},\quad \forall f\in\mathcal{H}_{K^1}.
\end{equation*}
The final equality follows from the reproducing property \eqref{eq:reproducing}. So then $S^*_{\mathbf{x}} c = \sum_{i=1}^m c_iK^1_{x_i}$ for all $c\in \mathbb{R}^m$.

The following Lemma shows that the function $f_{\mathbf{z},\lambda}$ calculated in Algorithm \ref{alg:approxf} is the minimiser of a regularised cost function. We omit the proof, which is similar to that contained in \cite[Theorem 1]{SmaZho05}, except for the introduction of the weights $w$. 

\begin{lemma}		\label{lem:fzlambda}
Let
\begin{equation}			\label{eq:fzlambda}
f_{\mathbf{z},\lambda}:=\underset{f\in\mathcal{H}_{K^1}}{\arg\min}\left\{ \sum_{i=1}^m w_{x_i}(f(x_i)-y_i)^2 + \lambda||f||^2_{{K^1}}\right\}.
\end{equation}
Let $D_w \in \mathbb{R}^{m\times m}$ be the diagonal matrix with entries $\{w_{x_i}\}_{i=1}^m$, and let $\mathbf{y}:=\{y_i\}_{i=1}^m$. If $S_{\mathbf{x}}^* D_w S_{\mathbf{x}} + \lambda I$ is invertible, then $f_{\mathbf{z},\lambda}$ exists and is given by
\begin{equation}
f_{\mathbf{z},\lambda} = J  \mathbf{y},\quad J:=(S_{\mathbf{x}}^* D_w S_{\mathbf{x}} + \lambda I)^{-1}S_{\mathbf{x}}^* D_w.	\label{eqn:Ldef}
\end{equation}
Furthermore, if $f_{\mathbf{z},\lambda}=\sum_{i=1}^m a_i {K^1}_{x_i}$, then the coefficients $a:=\{a_i\}_{i=1}^m$ may be calculated as
\begin{equation}
a = (A_{\mathbf{x}}D_w A_{\mathbf{x}} + \lambda A_{\mathbf{x}})^{-1} A_{\mathbf{x}} D_w \mathbf{y},			\label{eqn:coeffsa}
\end{equation}
where $A_{\mathbf{x}} \in \mathbb{R}^{m\times m}$ is the symmetric matrix defined by $(A_{\mathbf{x}} )_{i,j} = {K^1}(x_i,x_j)$.
\end{lemma}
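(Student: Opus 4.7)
The plan is to treat this as a standard Tikhonov-regularization argument on the Hilbert space $\mathcal{H}_{K^1}$; since the only novelty compared to the classical unweighted version in \cite{SmaZho05} is the insertion of the diagonal weight $D_w$, the structure of the argument is unchanged. First I would rewrite the cost functional in the operator-theoretic form
$$\Phi(f) = \bigl\langle D_w(S_\mathbf{x} f - \mathbf{y}),\, S_\mathbf{x} f - \mathbf{y}\bigr\rangle_{\mathbb{R}^m} + \lambda\,\|f\|_{K^1}^2,$$
noting that $D_w$ is positive semidefinite (the Voronoi weights $w_{x_i}=\rho(\mathcal{V}_i(\mathbf{x}))$ are nonnegative), so the data-fidelity term is convex while the regularization term is strictly convex and coercive whenever $\lambda>0$. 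Hence $\Phi$ admits a unique minimizer on $\mathcal{H}_{K^1}$.

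To identify this minimizer I would compute the Fr\'echet derivative. Expanding $\Phi(f+\varepsilon h)$ in $\varepsilon$ and invoking the adjoint relation $\langle S_\mathbf{x} h,c\rangle_{\mathbb{R}^m}=\langle h,S_\mathbf{x}^* c\rangle_{K^1}$ already derived in the excerpt yields
$$D\Phi(f)[h] = 2\bigl\langle (S_\mathbf{x}^* D_w S_\mathbf{x} + \lambda I)f - S_\mathbf{x}^* D_w \mathbf{y},\,h\bigr\rangle_{K^1}$$
for every $h\in\mathcal{H}_{K^1}$. Setting this to zero in all directions $h$ gives the Euler--Lagrange equation $(S_\mathbf{x}^* D_w S_\mathbf{x} + \lambda I)f_{\mathbf{z},\lambda} = S_\mathbf{x}^* D_w \mathbf{y}$, and under the stated invertibility hypothesis this produces $f_{\mathbf{z},\lambda}=J\mathbf{y}$, which is \eqref{eqn:Ldef}.

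For the coefficient formula \eqref{eqn:coeffsa} I would observe that the right-hand side of the Euler--Lagrange equation lies in $\operatorname{range}(S_\mathbf{x}^*)=\operatorname{span}\{K^1_{x_1},\dots,K^1_{x_m}\}$, so $f_{\mathbf{z},\lambda}=S_\mathbf{x}^* a$ for some $a\in\mathbb{R}^m$. Substituting and using the identity $S_\mathbf{x} S_\mathbf{x}^* = A_\mathbf{x}$ (immediate from the definition of $S_\mathbf{x}^*$ and the reproducing property) turns the operator equation into $S_\mathbf{x}^*\bigl(D_w A_\mathbf{x} a + \lambda a\bigr) = S_\mathbf{x}^* D_w \mathbf{y}$, and applying $S_\mathbf{x}$ to both sides delivers the claimed $(A_\mathbf{x} D_w A_\mathbf{x} + \lambda A_\mathbf{x})a = A_\mathbf{x} D_w \mathbf{y}$. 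The main conceptual step, essentially the representer theorem, is the observation that $f_{\mathbf{z},\lambda}\in\operatorname{range}(S_\mathbf{x}^*)$; this comes for free from the operator equation and so needs no separate orthogonal-decomposition argument. The rest is linear-algebraic bookkeeping.
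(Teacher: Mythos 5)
Your proof is correct, and it fills in an argument the paper deliberately omits: the authors state only that the proof is ``similar to that contained in \cite[Theorem 1]{SmaZho05}, except for the introduction of the weights $w$,'' and your Tikhonov/Fr\'echet-derivative derivation of the normal equation $(S_\mathbf{x}^*D_wS_\mathbf{x}+\lambda I)f_{\mathbf{z},\lambda}=S_\mathbf{x}^*D_w\mathbf{y}$, followed by the observation that $\lambda f_{\mathbf{z},\lambda}=S_\mathbf{x}^*(D_w\mathbf{y}-D_wS_\mathbf{x}f_{\mathbf{z},\lambda})$ forces $f_{\mathbf{z},\lambda}\in\operatorname{range}(S_\mathbf{x}^*)$, followed by the identity $S_\mathbf{x}S_\mathbf{x}^*=A_\mathbf{x}$, is exactly that standard argument with the weight matrix $D_w$ carried through. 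Two minor remarks: the weights $w_{x_i}=\rho(\mathcal{V}_i(\mathbf{x}))$ are in fact strictly positive (the Voronoi cells are nonempty open sets and $\rho$ is strictly positive), so $D_w$ is positive definite rather than merely semidefinite, though semidefiniteness suffices for your convexity argument; and in the last step, applying $S_\mathbf{x}$ to $S_\mathbf{x}^*(D_wA_\mathbf{x}a+\lambda a-D_w\mathbf{y})=0$ loses no information because $A_\mathbf{x}$ is positive definite for the Wendland kernel on distinct nodes, which the paper records separately and which also makes the coefficient vector $a$ unique.
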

%

Our strategy to prove convergence of the estimate $f_{\mathbf{z},\lambda}$ to $f^*$ combines and adapts results contained in \cite{SmaZho04,SmaZho05,SmaZho07}. The main difference in our case to the standard assumptions in learning theory is that the data $\mathbf{z} = (x_i,y_i)_{i=1}^m$ is not necessarily generated from an underlying probability distribution. Instead, our data is generated by the underlying dynamical system \eqref{eqn:dynsys}, and the data sites may be situated arbitrarily. They could potentially be chosen deterministically, or indeed may be generated randomly.   The assumptions made in \cite{SmaZho05} correspond to this setting, but we would like to provide estimates in terms of the density of the data. This is why we have introduced the weights $(w_{x_i})_{i=1}^m$ corresponding to the $\rho$-volume Voronoi tessellation (c.f. also \cite{SmaZho04}, where a weighting scheme is introduced in a  different setting).

\begin{definition}[Fill distance]			\label{def:filldistance}
Let $\mathcal{C}\subset\mathbb{R}^d$ be a compact set and  $\mathbf{x}:=\{x_1,\ldots,x_m\}$ be a grid, where $\mathbf{x}\subset \mathcal{C}$. We denote the \emph{fill distance} of $\mathbf{x}$ in $\mathcal{C}$ as
\begin{equation*}
h_{\mathbf{x}} = \max_{y\in \mathcal{C}} \min_{x_i \in \mathbf{x}} ||x_i - y||.
\end{equation*}
In particular, for all $y\in \mathcal{C}$ there is a grid point $x_i \in \mathbf{x}$ such that $||y - x_i|| \le h_{\mathbf{x}}$.
\end{definition}

In order to provide an estimate for   $||f_{\mathbf{z},\lambda} - f^*||_{K^1}$, we first define $f_{\mathbf{x},\lambda}, f_\lambda\in\mathcal{H}_{K^1}$ by
\begin{eqnarray}
f_{\mathbf{x},\lambda}& := & J(S_{\mathbf{x}}f^*)		\label{eq:fxlambda}\\
\text{and}\qquad f_\lambda & := & \underset{f\in\mathcal{H_{K^1}}}{\arg\min} \left\{ ||f - f^*||^2_\rho + \lambda ||f||^2_{K^1}\right\}	\label{eq:flambda}
\end{eqnarray}
where $||f||^2_\rho =  \int_X |f(x)|^2d\rho(x)$. Recall from Section \ref{sec:RKHS} that $\rho$ is a finite strictly positive Borel measure on $X$.

The function $f_{\mathbf{x},\lambda}$  may be seen as a `noise-free' version of $f_{\mathbf{z},\lambda}$, such that in the case of no noise, i.e. $\eta_x = 0$ for all $x\in X$, then we would have $f_{\mathbf{z},\lambda} = f_{\mathbf{x},\lambda}$. Correspondingly, the function $f_\lambda$ can be seen as a `data-free' limit of $f_{\mathbf{x},\lambda}$, or the limiting function as the data sites $\mathbf{x}$ become arbitrarily dense in $X$. Finally, if $f^*\in\mathcal{H}_{K^1}$, then $f^*$ is the `regularisation-free' version of $f_\lambda$ -- the limit of $f_\lambda$ as $\lambda\rightarrow 0$.

The strategy is to break down the estimate of $||f_{\mathbf{z},\lambda} - f^*||_{L^\infty(X)}$ according to
\begin{eqnarray*}
||f_{\mathbf{z},\lambda} - f^*||_{L^\infty(X)} & = & ||f_{\mathbf{z},\lambda} - f_{\mathbf{x},\lambda} + f_{\mathbf{x},\lambda} - f_\lambda + f_\lambda - f^*||_{L^\infty(X)}\\
& \le &  ||f_{\mathbf{z},\lambda} - f_{\mathbf{x},\lambda}||_{L^\infty(X)} + ||f_{\mathbf{x},\lambda} - f_\lambda||_{L^\infty(X)} + ||f_\lambda - f^*||_{L^\infty(X)}
\end{eqnarray*}
and estimate each of the three terms in the inequality. These three terms correspond to errors incurred by the noise (sample error), the finite set of data sites (integration error) and the regularisation parameter $\lambda$ (regularisation error).

\subsection{Sample error}

An estimate for $||f_{\mathbf{z},\lambda} - f_{\mathbf{x},\lambda}||^2_{K^1}$ for an unweighted approximation scheme is given in Theorem 2 of \cite{SmaZho05}. A similar result is given in the following lemma, that incorporates the weights of our  scheme. The proof follows that of \cite{SmaZho05}, and here we will just sketch the main adaptations. Importantly, our estimate provides convergence of $f_{\mathbf{z},\lambda}$ to $f_{\mathbf{x},\lambda}$ as the data sites $\mathbf{x}$ become more dense, or as the quantity $||\mathbf{w}||_{\mathbb{R}^m}$ tends to zero.

\begin{lemma}			\label{lem:sampleerror}
For $\lambda > 0$,  for every $0 < \delta < 1$, with probability $1 - \delta$, we have
\begin{equation}
||f_{\mathbf{z},\lambda} - f_{\mathbf{x},\lambda}||_{L^\infty(X)} \le \kappa ||f_{\mathbf{z},\lambda} - f_{\mathbf{x},\lambda}||_{K^1} \le \frac{||\mathbf{w}||_{\mathbb{R}^m} \sigma \kappa^2}{\lambda\sqrt{\delta}}	\label{eq:sampleerror}
\end{equation}
where $\sigma^2:={\sup_{x\in X}\sigma^2_{x}}$ and $\kappa^2:={\sup_{x\in X} {K^1}(x,x)}$.
\end{lemma}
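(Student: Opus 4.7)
The plan is to express the difference $f_{\mathbf{z},\lambda} - f_{\mathbf{x},\lambda}$ as a linear operator applied to the noise vector, then bound the RKHS norm via an operator norm estimate plus a probabilistic bound on the noise. Specifically, since $\mathbf{y} = S_{\mathbf{x}} f^* + \boldsymbol{\eta}$ with $\boldsymbol{\eta} := (\eta_{x_i})_{i=1}^m$, linearity of $J$ in \eqref{eqn:Ldef} together with \eqref{eq:fxlambda} gives $f_{\mathbf{z},\lambda} - f_{\mathbf{x},\lambda} = J\boldsymbol{\eta}$. The first inequality in \eqref{eq:sampleerror} is then immediate from the embedding \eqref{eq:supnorminclusion}, so the work reduces to bounding $\|J\boldsymbol{\eta}\|_{K^1}$.

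The operator $S_{\mathbf{x}}^* D_w S_{\mathbf{x}}$ is self-adjoint and positive semi-definite on $\mathcal{H}_{K^1}$ (since the weights $w_{x_i}$ are non-negative), so $S_{\mathbf{x}}^* D_w S_{\mathbf{x}} + \lambda I \succeq \lambda I$ and hence its inverse has operator norm at most $1/\lambda$. This immediately gives
\begin{equation*}
\|J\boldsymbol{\eta}\|_{K^1} \le \tfrac{1}{\lambda}\,\|S_{\mathbf{x}}^* D_w \boldsymbol{\eta}\|_{K^1}.
\end{equation*}
Since $S_{\mathbf{x}}^* D_w \boldsymbol{\eta} = \sum_{i=1}^m w_{x_i}\eta_{x_i} K^1_{x_i}$, the reproducing property yields
\begin{equation*}
\|S_{\mathbf{x}}^* D_w \boldsymbol{\eta}\|_{K^1}^2 = \sum_{i,j=1}^m w_{x_i} w_{x_j} \eta_{x_i} \eta_{x_j}\, K^1(x_i,x_j).
\end{equation*}

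Next I would take expectation over the noise. By independence of the $\eta_{x_i}$ and the assumption of zero mean, the off-diagonal terms vanish, leaving $\mathbb{E}\|S_{\mathbf{x}}^* D_w \boldsymbol{\eta}\|_{K^1}^2 = \sum_i w_{x_i}^2 \sigma_{x_i}^2 K^1(x_i,x_i)$. Bounding $\sigma_{x_i}^2 \le \sigma^2$ and $K^1(x_i,x_i) \le \kappa^2$ uniformly gives
\begin{equation*}
\mathbb{E}\|S_{\mathbf{x}}^* D_w \boldsymbol{\eta}\|_{K^1}^2 \le \sigma^2 \kappa^2 \sum_{i=1}^m w_{x_i}^2 = \sigma^2 \kappa^2 \|\mathbf{w}\|_{\mathbb{R}^m}^2.
\end{equation*}

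Finally, to pass from expectation to a high-probability statement I would apply Markov's inequality to the non-negative random variable $\|S_{\mathbf{x}}^* D_w \boldsymbol{\eta}\|_{K^1}^2$: for any $t>0$, $\mathbb{P}(\|S_{\mathbf{x}}^* D_w \boldsymbol{\eta}\|_{K^1} \ge t) \le \sigma^2\kappa^2\|\mathbf{w}\|_{\mathbb{R}^m}^2/t^2$. Choosing $t = \sigma\kappa\|\mathbf{w}\|_{\mathbb{R}^m}/\sqrt{\delta}$ makes the right-hand side equal to $\delta$, so with probability at least $1-\delta$ we have $\|S_{\mathbf{x}}^* D_w \boldsymbol{\eta}\|_{K^1} \le \sigma\kappa\|\mathbf{w}\|_{\mathbb{R}^m}/\sqrt{\delta}$. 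Combining with the operator-norm bound and the $L^\infty$-embedding factor $\kappa$ gives exactly \eqref{eq:sampleerror}. The only mildly delicate point is the choice of deviation inequality: Markov on the second moment is the simplest route that yields the $1/\sqrt{\delta}$ scaling stated in the lemma; sharper concentration (e.g.\ Bernstein) would require additional boundedness assumptions on $\eta_{x_i}$ that are not made here, so I would stick with the Chebyshev-type argument.
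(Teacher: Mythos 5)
Your proposal is correct and follows essentially the same route as the paper: express $f_{\mathbf{z},\lambda}-f_{\mathbf{x},\lambda}=J\boldsymbol{\eta}$, compute $\mathbb{E}\|S_{\mathbf{x}}^*D_w\boldsymbol{\eta}\|_{K^1}^2\le\sigma^2\kappa^2\|\mathbf{w}\|^2$ using independence and zero mean, bound $\|(S_{\mathbf{x}}^*D_wS_{\mathbf{x}}+\lambda I)^{-1}\|\le 1/\lambda$, and apply Markov's inequality to the squared norm. The only cosmetic difference is that you apply Markov to $\|S_{\mathbf{x}}^*D_w\boldsymbol{\eta}\|_{K^1}^2$ and then multiply by the deterministic $1/\lambda$ factor, whereas the paper first folds the operator bound into the expectation of $\|f_{\mathbf{z},\lambda}-f_{\mathbf{x},\lambda}\|_{K^1}^2$ and then applies Markov; the two orderings are equivalent.
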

\begin{proof} First note that if $\lambda >0$ then $(S^*_{\mathbf{x}} D_w S_{\mathbf{x}} + \lambda I)$ is invertible. This follows since it is the sum of a positive and a strictly positive operator on $\mathcal{H}_{K^1}$. Similar to \cite[Theorem 2]{SmaZho05}, we have
\begin{eqnarray*}
||S^*_{\mathbf{x}} D_w (y - S_{\mathbf{x}}f^*)||^2_{K^1} 
& = &  \sum_{i,j=1}^m  w_{x_i}w_{x_j}\eta_{x_i}\eta_{x_j}{K^1}(x_i,x_j).
\end{eqnarray*}
Since we have assumed that the $\eta_x$ random variables are independent with zero mean, we have that
\begin{equation*}
\mathbb{E}(||S^*_{\mathbf{x}} D_w (y - S_{\mathbf{x}}f^*)||^2_{K^1})  =  \sum_{i=1}^m w_{x_i}^2 \sigma_{x_i}^2 {K^1}(x_i,x_i)
 \le  \sigma^2\kappa^2 \sum_{i=1}^m w^2_{x_i} = \sigma^2\kappa^2 ||\mathbf{w}||^2,
\end{equation*}
where the inequality follows from our assumption that  $\sigma^2:=\sup_{x\in X}\sigma_{x_i}^2 < \infty$. Then it follows that
\begin{equation*}
\mathbb{E}(||f_{\mathbf{z},\lambda} - f_{\mathbf{x},\lambda}||_{K^1}^2) \le ||(S^*_{\mathbf{x}} D_w S_{\mathbf{x}} + \lambda I)^{-1}||^2||\mathbf{w}||^2\sigma^2\kappa^2.
\end{equation*}
The operator $(S^*_{\mathbf{x}} D_w S_{\mathbf{x}} + \lambda I)^{-1}$ is estimated analagously to \cite[Proposition 1]{SmaZho05} to obtain
\begin{equation}			\label{eq:SDS}
||(S^*_{\mathbf{x}} D_w S_{\mathbf{x}} + \lambda I)^{-1}|| \le \frac{1}{\lambda}.
\end{equation}
Then we have
\begin{equation*}
\mathbb{E}(||f_{\mathbf{z},\lambda} - f_{\mathbf{x},\lambda}||_{K^1}^2) \le \frac{||\mathbf{w}||^2\sigma^2\kappa^2}{\lambda^2}.
\end{equation*}
Finally, for $0 < \delta < 1$,  application of the Markov inequality to the random variable $||f_{\mathbf{z},\lambda} - f_{\mathbf{x},\lambda}||^2_{K^1}$ gives
\begin{equation*}
\mathbb{P}\left(||f_{\mathbf{z},\lambda} - f_{\mathbf{x},\lambda}||^2_{K^1} \ge  \frac{||\mathbf{w}||^2\sigma^2\kappa^2}{\lambda^2\delta}\right) \le \delta.
\end{equation*}
Combining the above together with \eqref{eq:supnorminclusion} proves the lemma.
\qquad\end{proof}

\subsection{Integration error}

To establish our estimate for the integration error we need to make additional assumptions on the choice of Borel measure $\rho$. Namely, we will require that  it is strongly continuous (c.f. \cite{Pag97}):

\begin{definition}
A Borel measure $\rho$ is strongly continuous if for all hyperplanes $H\subset\mathbb{R}^d$, we have $\rho(H)=0$.
\end{definition}

Note that this requirement implies that the boundaries of the Voronoi tessellation have $\rho$-measure zero. Lebesgue measure is still an example measure that satisfies all of our assumptions, recall Section \ref{sec:RKHS}.
An estimate for the integration error $||f_{\mathbf{x},\lambda} - f_\lambda||_{L^\infty(X)}$ is given in the following lemma.

\begin{lemma} 			\label{lem:integrationerror}
Let $\rho$ be a (finite) strictly positive, strongly continuous Borel measure on $X$.
For $\lambda > 0$, we have
\begin{equation}
||f_{\mathbf{x},\lambda} - f_\lambda||_{L^\infty(X)} \le \frac{C}{\lambda}\kappa ||f^* - f_\lambda||_{K^1} \, h_\mathbf{x} \rho(X).
\end{equation}
\end{lemma}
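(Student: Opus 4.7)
The plan is to derive an operator identity relating $f_{\mathbf{x},\lambda}$ and $f_\lambda$ through a difference of a discrete sampling operator and an integral operator, then reduce the desired bound to estimating the operator norm of this difference, which in turn is controlled by fill-distance.

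First, I would set up the characterizations of both functions in the RKHS. Taking the Euler--Lagrange variation of the functional in \eqref{eq:flambda} in a test direction $g\in\mathcal{H}_{K^1}$ and using the reproducing property yields, after substituting $g=K^1_x$, the identity $(L_{K^1}+\lambda I)f_\lambda = L_{K^1}f^*$. On the other hand, the explicit form $f_{\mathbf{x},\lambda} = J S_{\mathbf{x}} f^*$ from \eqref{eq:fxlambda} gives $(A_1+\lambda I)f_{\mathbf{x},\lambda} = A_1 f^*$, where $A_1 := S_{\mathbf{x}}^* D_w S_{\mathbf{x}}$. Setting $A_2 := L_{K^1}$ and subtracting the two equations produces the key identity
$$
(A_1+\lambda I)(f_{\mathbf{x},\lambda}-f_\lambda) = (A_1-A_2)(f^*-f_\lambda).
$$
Combining this with $\|(A_1+\lambda I)^{-1}\|_{\mathcal{H}_{K^1}\to\mathcal{H}_{K^1}}\le 1/\lambda$ from \eqref{eq:SDS} and the inclusion $\|\cdot\|_{L^\infty(X)}\le \kappa\|\cdot\|_{K^1}$ from \eqref{eq:supnorminclusion} reduces the problem to bounding $\|A_1-A_2\|_{\mathcal{H}_{K^1}\to\mathcal{H}_{K^1}}$ by $C\,h_{\mathbf{x}}\rho(X)$ up to a $\kappa$-dependent factor absorbed in $C$.

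For the operator norm of $A_1-A_2$, I would use the dual representation $\|A_1-A_2\|_{K^1\to K^1} = \sup_{\|h\|_{K^1},\|g\|_{K^1}\le 1}|\langle(A_1-A_2)h,g\rangle_{K^1}|$. A direct computation using the reproducing property gives $\langle A_1 h,g\rangle_{K^1}=\sum_i w_{x_i}h(x_i)g(x_i)$ and $\langle A_2 h,g\rangle_{K^1}=\int_X h(y)g(y)\,d\rho(y)$. Since $w_{x_i}=\rho(\mathcal{V}_i(\mathbf{x}))$ and the strong continuity of $\rho$ makes the union of Voronoi cell boundaries $\rho$-null, we can rewrite
$$
\langle (A_1-A_2)h,g\rangle_{K^1} \;=\; \sum_{i=1}^m \int_{\mathcal{V}_i(\mathbf{x})}\bigl[h(x_i)g(x_i)-h(y)g(y)\bigr]\,d\rho(y).
$$
For $y\in\mathcal{V}_i(\mathbf{x})$ the fill-distance definition gives $\|y-x_i\|\le h_{\mathbf{x}}$, while the identity $|h(y)-h(x_i)|=|\langle h,K^1_y-K^1_{x_i}\rangle_{K^1}|\le \|h\|_{K^1}\|K^1_y-K^1_{x_i}\|_{K^1}$, combined with the expansion $\|K^1_y-K^1_{x_i}\|_{K^1}^2 = K^1(y,y)+K^1(x_i,x_i)-2K^1(y,x_i)$ and the $C^2$-smoothness of the Wendland kernel $K^1$, yields the Lipschitz bound $|h(y)-h(x_i)|\le C\|h\|_{K^1}\|y-x_i\|$. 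A product-rule split of the integrand together with $|h(x_i)|,|g(y)|\le\kappa\|h\|_{K^1},\kappa\|g\|_{K^1}$ produces $|h(x_i)g(x_i)-h(y)g(y)|\le C\kappa h_{\mathbf{x}}\|h\|_{K^1}\|g\|_{K^1}$, and summing over $i$ using $\sum_i\rho(\mathcal{V}_i(\mathbf{x}))=\rho(X)$ gives $\|A_1-A_2\|_{K^1\to K^1}\le C\kappa h_{\mathbf{x}}\rho(X)$.

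Feeding this back into the reduction from the first paragraph yields the stated inequality, with the extra factor of $\kappa$ absorbed into the generic constant $C$. The main obstacle is the uniform Lipschitz estimate for functions in $\mathcal{H}_{K^1}$, which requires controlling the second difference of the kernel $K^1$; this is the place where the smoothness hypotheses on $k_1$ from Section \ref{sec:SobolevspaceRKHS} are implicitly used. A subtler but essential step is the partition-of-unity-style identity $\int_X = \sum_i \int_{\mathcal{V}_i(\mathbf{x})}$, which requires precisely the strong continuity of $\rho$ so that the finite union of hyperplane boundaries between Voronoi cells carries no $\rho$-mass.
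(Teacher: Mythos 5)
Your proposal is correct and follows essentially the same route as the paper: you derive the identity $(A_1+\lambda I)(f_{\mathbf{x},\lambda}-f_\lambda) = (A_1-L_{K^1})(f^*-f_\lambda)$, use $\|(A_1+\lambda I)^{-1}\|\le 1/\lambda$, and then exploit the Voronoi partition together with the kernel's Lipschitz/$C^2$ regularity to extract the $h_{\mathbf{x}}\,\rho(X)$ factor. The only cosmetic difference is that you bound the operator norm $\|A_1-L_{K^1}\|_{K^1\to K^1}$ via the bilinear form, whereas the paper directly estimates $\|(A_1-L_{K^1})(f^*-f_\lambda)\|$ by expanding it as a sum of integrals over Voronoi cells; both reductions are interchangeable here.
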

\begin{proof}
 It is shown in \cite{CucSma02} that the solution to \eqref{eq:flambda} for $\lambda > 0$ is given by
\begin{equation}
f_\lambda = (L_{K^1} + \lambda I)^{-1} L_{K^1} f^*,	\label{eq:flambdasol}
\end{equation}
where $L_{K^1}$ was defined in equation \eqref{eq:LK}.

Now we have
\begin{eqnarray*}
f_{\mathbf{x},\lambda} - f_\lambda & = & (S^*_{\mathbf{x}} D_w S_{\mathbf{x}} + \lambda I)^{-1} S^*_{\mathbf{x}} D_w S_{\mathbf{x}} f^* -  f_\lambda\\
& = & (S^*_{\mathbf{x}} D_w S_{\mathbf{x}} + \lambda I)^{-1}\left\{ S^*_{\mathbf{x}} D_w S_{\mathbf{x}} (f^* -  f_\lambda) - (L_{K^1} f^* - L_{K^1} f_\lambda)\right\}\\
& = & (S^*_{\mathbf{x}} D_w S_{\mathbf{x}} + \lambda I)^{-1}\left\{ \sum_{i=1}^m w_{x_i} (f^*(x_i) -  f_\lambda(x_i))K^1_{x_i} - L_{K^1} (f^* -  f_\lambda)\right\}
\end{eqnarray*}
where the second equality follows from \eqref{eq:flambdasol} and the final equality follows from the definition of $S_{\mathbf{x}}$ and its adjoint.

Recall that we have chosen the weighting $\mathbf{w}$ to be equal to the $\rho$-volume of the Voronoi tessellation associated to the data sites $\mathbf{x}$ -- that is, $w_{x_i} = \rho(V_i(\mathbf{x}))$. Also, since $\rho$ is strongly continuous it holds that
\begin{equation*}
L_{K^1}(f^* - f_\lambda) 
=\sum_{i=1}^m \int_{V_i(\mathbf{x})} {K^1}(x,y)(f^*(y) - f_\lambda(y))d\rho(y)
\end{equation*}
and so
\begin{eqnarray*}
\left|\left|\sum_{i=1}^m w_{x_i} (f^*(x_i) -  f_\lambda(x_i))K^1_{x_i} - L_{K^1} (f^* -  f_\lambda)\right|\right|_{L^\infty(X)}	\hspace{-7.3cm}\\
& = & \left|\left|\sum_{i=1}^m \left\{ w_{x_i} (f^*(x_i) -  f_\lambda(x_i))K^1_{x_i} - \int_{V_i(\mathbf{x})}{K_y^1} (f^*(y) - f_\lambda(y))d\rho(y)\right\}\right|\right|_{L^\infty(X)}\\
& \le & \sum_{i=1}^m \int_{V_i(\mathbf{x})} \left|\left|(f^*(x_i) -  f_\lambda(x_i))K^1_{x_i} - (f^*(y) - f_\lambda(y)) {K_y^1}\right|\right|_{L^\infty(X)} d\rho(y)\\
& \le & C\kappa ||f^* - f_\lambda||_{K^1} \sum_{i=1}^m \int_{V_i(\mathbf{x})} |x_i - y | d\rho(y)
\,\le\,  C\kappa ||f^* - f_\lambda||_{K^1} \, h_\mathbf{x} \rho(X)
\end{eqnarray*}
The second equality follows from the fact that $(f^*-f_\lambda)$ and $K_x$ belong to $\mathcal{H}_{K^1}$, and are therefore bounded and Lipschitz on $X$ (cf. Corollary \ref{cor:smoothnessK1K2}).
This, together with \eqref{eq:SDS} proves the lemma.
\qquad\end{proof}

\subsection{Regularisation error}

For the regularisation error we  recall the following result from \cite[Lemma 3]{SmaZho07} (c.f. also \cite[Theorem 4]{SmaZho05}):

\begin{lemma}			\label{lem:regularisationerror}
Suppose that  $L_{K^1}^{-r} f^* \in {L}^2_\rho(X)$ for some  $\frac{1}{2} < r \le 1$. Then we have
\begin{equation}				\label{eqn:estflambdafstar}
||f_\lambda - f^*||_{K^1} \le \lambda^{r-\frac{1}{2}} ||L_{K^1}^{-r} f^*||_{{L}^2_\rho(X)},\qquad \frac{1}{2} < r \le 1.
\end{equation}
\end{lemma}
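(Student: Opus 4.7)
The plan is to exploit the explicit formula $f_\lambda = (L_{K^1} + \lambda I)^{-1} L_{K^1} f^*$ from \eqref{eq:flambdasol} together with the functional calculus for the self-adjoint, positive, compact operator $L_{K^1}$ on $L^2_\rho(X)$. The key technical device is the Hilbert space isomorphism $L_{K^1}^{1/2}: L^2_\rho(X) \to \mathcal{H}_{K^1}$ recalled in Section~\ref{sec:RKHS}, which yields the isometry identity $\|g\|_{K^1} = \|L_{K^1}^{-1/2} g\|_{L^2_\rho(X)}$ for every $g$ in the range of $L_{K^1}^{1/2}$. Note that the source condition $L_{K^1}^{-r} f^* \in L^2_\rho(X)$ with $r > 1/2$ forces $f^*$ to lie in the range of $L_{K^1}^r \subset L_{K^1}^{1/2}$, so $f^* \in \mathcal{H}_{K^1}$ and all the subsequent manipulations are well-defined.

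First I would rewrite the residual as
$$f_\lambda - f^* = (L_{K^1} + \lambda I)^{-1} L_{K^1} f^* - f^* = -\lambda (L_{K^1} + \lambda I)^{-1} f^*,$$
and then transfer to the $L^2_\rho(X)$ norm via the isomorphism, obtaining
$$\|f_\lambda - f^*\|_{K^1} = \lambda \, \|L_{K^1}^{-1/2}(L_{K^1} + \lambda I)^{-1} f^*\|_{L^2_\rho(X)}.$$
Next I would insert the source condition by setting $g := L_{K^1}^{-r} f^* \in L^2_\rho(X)$, so that $f^* = L_{K^1}^r g$; since all bounded Borel functions of $L_{K^1}$ commute under the functional calculus, the right-hand side becomes $\lambda \, \|L_{K^1}^{r-1/2}(L_{K^1} + \lambda I)^{-1} g\|_{L^2_\rho(X)}$.

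To conclude, I would bound the operator $\lambda \, L_{K^1}^{r-1/2}(L_{K^1} + \lambda I)^{-1}$ on $L^2_\rho(X)$ by its spectral-function supremum,
$$\sup_{t \ge 0} \frac{\lambda \, t^{r-1/2}}{t + \lambda}.$$
The substitution $t = \lambda s$ reduces this to $\lambda^{r-1/2} \sup_{s \ge 0} s^{r-1/2}/(s+1)$, and for $r - 1/2 \in (0, 1/2]$ the elementary inequality $s^{r-1/2} \le 1 + s$ (verified separately for $s \le 1$ and $s \ge 1$) shows the remaining supremum is at most $1$. Chaining the estimates gives exactly $\|f_\lambda - f^*\|_{K^1} \le \lambda^{r-1/2} \|L_{K^1}^{-r} f^*\|_{L^2_\rho(X)}$.

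The main, essentially technical, obstacle is justifying the manipulations involving the unbounded operators $L_{K^1}^{-r}$ and $L_{K^1}^{-1/2}$; I would handle this via the spectral decomposition $L_{K^1} = \sum_j \mu_j \langle \cdot, e_j\rangle_{L^2_\rho} e_j$, under which the source condition becomes the summability $\sum_j \mu_j^{-2r} |\langle f^*, e_j\rangle|^2 < \infty$, and the whole computation reduces to the scalar inequality above applied eigenvalue by eigenvalue.
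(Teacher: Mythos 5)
The paper itself does not prove this lemma: it simply cites it as Lemma~3 of Smale--Zhou (2007) (cf.\ also Theorem~4 of Smale--Zhou (2005)). Your argument is a correct, self-contained proof and it is essentially the same spectral/functional-calculus argument used in the cited source, so this is not a genuinely different route. To spell out the checks: the algebraic identity $f_\lambda - f^* = (L_{K^1}+\lambda I)^{-1}L_{K^1}f^* - (L_{K^1}+\lambda I)^{-1}(L_{K^1}+\lambda I)f^* = -\lambda(L_{K^1}+\lambda I)^{-1}f^*$ is correct; under the source condition $f^*=L_{K^1}^r g$ with $r>1/2$ one has $f^*\in\mathrm{range}(L_{K^1}^{1/2})=\mathcal{H}_{K^1}$, so the residual lies in $\mathcal{H}_{K^1}$ and the isometry $\|h\|_{K^1}=\|L_{K^1}^{-1/2}h\|_{L^2_\rho(X)}$ applies; commuting the fractional powers gives $\lambda\|L_{K^1}^{r-1/2}(L_{K^1}+\lambda I)^{-1}g\|_{L^2_\rho(X)}$; and the scalar bound
\begin{equation*}
\sup_{t\ge 0}\frac{\lambda\,t^{a}}{t+\lambda} \;=\; \lambda^{a}\sup_{s\ge 0}\frac{s^{a}}{s+1} \;\le\; \lambda^{a}, \qquad a=r-\tfrac12\in(0,\tfrac12],
\end{equation*}
follows from $s^{a}\le 1+s$ (use $s^a\le 1$ for $s\le 1$ and $s^a\le s$ for $s\ge 1$, valid for any $a\in(0,1]$). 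The only cosmetic slip is writing the range inclusion as $L_{K^1}^{r}\subset L_{K^1}^{1/2}$ where you mean $\mathrm{range}(L_{K^1}^{r})\subset\mathrm{range}(L_{K^1}^{1/2})$; the intent is clear and the rest of the reasoning, including the eigenvalue-by-eigenvalue reduction at the end, is sound.
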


\subsection{Estimate for $||f_{\mathbf{z},\lambda} - f^*||_{L^\infty(X)}$}

Altogether, from lemmas \ref{lem:sampleerror}, \ref{lem:integrationerror} and \ref{lem:regularisationerror} and equation \eqref{eq:supnorminclusion} we have with probability $1-\delta$
\begin{equation}		
||f_{\mathbf{z},\lambda} - f^*||_{L^\infty(X)} \le
\frac{||\mathbf{w}||_{\mathbb{R}^m}\sigma\kappa^2}{\lambda \sqrt{\delta}} +  \frac{C\kappa ||f^* - f_\lambda||_{K^1} \, h_\mathbf{x} \rho(X)}{\lambda}     + \kappa\lambda^{r-\frac{1}{2}}||L_{K^1}^{-r}f^*||_{{L}^2_\rho(X)}
\end{equation}
Applying equation \eqref{eqn:estflambdafstar} again yields
\begin{equation}
||f_{\mathbf{z},\lambda} - f^*||_{L^\infty(X)} \le C \left(
\frac{||\mathbf{w}||_{\mathbb{R}^m}}{\lambda \sqrt{\delta}} + \lambda^{r-\frac{3}{2}} h_\mathbf{x}+ \lambda^{r-\frac{1}{2}} \right),
\label{eqn:festimate}
\end{equation}
where the constant $C$ depends on $f^*$,  $d$, $\sigma$ and the choice of RKHS $\mathcal{H}_{K^1}$. Now it is clear that the above bound can be made arbitrarily small as $||\mathbf{w}||_{\mathbb{R}^m}$ and $h_{\mathbf{x}}$ tend to zero, if $\lambda$ also tends to zero at an appropriate rate.
With the choice of regularisation parameter
\begin{equation}					\label{eqn:lambdachoice}
\lambda = \left(\max\left\{||\mathbf{w}||_{\mathbb{R}^m}, h_{\mathbf{x}}^{\frac{2}{3-2r}}\right\}\right)^{\frac{2}{2r+1}},
\end{equation}
with probability $1-\delta$, we obtain the  estimate
\begin{equation}				\label{eqn:fzlambdaestnolambda}
||f_{\mathbf{z},\lambda} - f^*||_{L^\infty(X)} \le C \left(\max\left\{||\mathbf{w}||_{\mathbb{R}^m}/\sqrt{\delta}, h_{\mathbf{x}}\right\}\right)^{\frac{2r-1}{2r+1}}.
\end{equation}


 \section{Proof of  Theorem \ref{thm:mainresult}}		\label{sec:proofofmainthm}

Let $V\in C^{\nu_1}(A(\overline{x}),\mathbb{R})$ and $T\in C^{\nu_1}(A(\overline{x})\,\backslash\,\{\overline{x}\},\mathbb{R})$ be the Lyapunov functions for $f^*$ as defined in Theorems \ref{thm:VLyapunovconverse} and \ref{thm:TLyapunovconverse}. Then we have
\begin{equation}
L_{f^*}V(x) := \langle \nabla V(x), f^*(x) \rangle_{\mathbb{R}^d} = -p(x),\qquad \text{for all }x\in A(\overline{x}),
\end{equation}
with $p(x)$ also defined in Theorem \ref{thm:VLyapunovconverse}. Similarly,
\begin{eqnarray}
L_{f^*}T(x) &=& -\overline{c}\qquad \text{for all }x\in A(\overline{x})\backslash\{\overline{x}\},\\
T(x) & = & \xi_T(x), \qquad x\in\Gamma.			\label{eq:TxiT}
\end{eqnarray}
for $c>0$, where $\Gamma = \{x\in A(\overline{x})\backslash\{\overline{x}\} \mid h(x) = 0\}$ is a non-characteristic hypersurface according to Definition \ref{def:noncharhyp} (with $h\in C^{\nu_1}(\mathbb{R}^d,\mathbb{R})$), and $\xi_T\in C^{\nu_1}(\Gamma,\mathbb{R})$.

As stated in Theorem \ref{thm:VLyapunovconverse}, the Lyapunov function $V$ is uniquely defined up to a constant.
We will fix $V$ by setting $V(\overline{x})=0$. The Lyapunov function $T$ is uniquely defined according to the above properties.


The following Lemma provides an alternative characterisation of the Lyapunov function $V$, which will be useful later in the section.
\begin{lemma}						\label{lem:VxiV}
Let $V\in C^{\nu_1}(A(\overline{x}),\mathbb{R})$ be the uniquely defined Lyapunov function as above, and $\Gamma = \{x\in A(\overline{x})\backslash\{\overline{x}\} \mid h(x) = 0\}$ ($h\in C^{\nu_1}(\mathbb{R}^d,\mathbb{R})$) is a non-characteristic hypersurface according to Definition \ref{def:noncharhyp}. Define $\xi_V \in C^{\nu_1}(\Gamma, \mathbb{R})$ by
$\xi_V(x) := V(x)$ for $x\in\Gamma$. Also recall $\varphi_{f^*}(t,\cdot)$ denotes the flow operator of \eqref{eqn:dynsys}, and define the function $\theta_{f^*}\in C^{\nu_1}(A(\overline{x})\backslash\{\overline{x}\},\mathbb{R})$ by $\varphi_{f^*}(t,x)\in\Gamma \Leftrightarrow t = \theta_{f^*}(x)$.
Then
\begin{equation*}
V(x) = \xi_V(\varphi_{f^*}(\theta_{f^*}(x),x)) + \int_0^{\theta_{f^*}(x)} p(\varphi_{f^*}(\tau ,x))d\tau,\qquad x\in A(\overline{x})\backslash\{\overline{x}\}.
\end{equation*}
\end{lemma}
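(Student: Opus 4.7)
The plan is to integrate the orbital derivative identity \eqref{eq:Veq} along a solution trajectory from $x$ to its intersection with $\Gamma$, and then substitute the boundary value $\xi_V$ at that intersection. This is a direct application of the fundamental theorem of calculus to the map $t \mapsto V(\varphi_{f^*}(t,x))$.

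First I would fix $x\in A(\overline{x})\setminus\{\overline{x}\}$ and note that $\theta_{f^*}(x)$ is well-defined by property (4) of Definition~\ref{def:noncharhyp}, since there is a unique time at which the trajectory meets $\Gamma$ (uniqueness comes from the transversality condition (3) of that definition, namely $\langle \nabla h, f^*\rangle < 0$ on $\Gamma$). Regularity of $\theta_{f^*}$ follows from the implicit function theorem applied to $h(\varphi_{f^*}(t,x)) = 0$, again using the transversality condition, so $\theta_{f^*} \in C^{\nu_1}$ as stated.

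Next I would compute the derivative of $V$ along the flow. Since $V\in C^{\nu_1}(A(\overline{x}),\mathbb{R})$ with $\nu_1 \ge 1$ and $\varphi_{f^*}(\cdot,x)$ is a solution of \eqref{eqn:dynsys}, the chain rule yields
\begin{equation*}
\frac{d}{dt} V(\varphi_{f^*}(t,x)) = \langle \nabla V(\varphi_{f^*}(t,x)), f^*(\varphi_{f^*}(t,x))\rangle_{\mathbb{R}^d} = -p(\varphi_{f^*}(t,x)),
\end{equation*}
where the last equality is \eqref{eq:Veq} applied at the point $\varphi_{f^*}(t,x)\in A(\overline{x})$ (the orbit stays in $A(\overline{x})$ for all $t$ in its existence interval).

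Then I would integrate from $0$ to $\theta_{f^*}(x)$, obtaining
\begin{equation*}
V(\varphi_{f^*}(\theta_{f^*}(x),x)) - V(x) = -\int_0^{\theta_{f^*}(x)} p(\varphi_{f^*}(\tau,x))\,d\tau.
\end{equation*}
Since $\varphi_{f^*}(\theta_{f^*}(x),x)\in\Gamma$ by definition of $\theta_{f^*}$, and $\xi_V = V|_\Gamma$, we may substitute $V(\varphi_{f^*}(\theta_{f^*}(x),x)) = \xi_V(\varphi_{f^*}(\theta_{f^*}(x),x))$ and rearrange to obtain the claimed formula.

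There is no real obstacle here; the only subtlety to flag is that $\theta_{f^*}(x)$ may be negative (when $x$ lies downstream of $\Gamma$ along its trajectory), but the calculation above is valid for either sign of $\theta_{f^*}(x)$ since the chain rule identity and the fundamental theorem of calculus apply on any interval containing $0$ and $\theta_{f^*}(x)$. The $C^{\nu_1}$ regularity of $\xi_V$ on $\Gamma$ is inherited from that of $V$ on $A(\overline{x})$.
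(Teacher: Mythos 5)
Your proof is correct, and it takes a genuinely different route from the paper's. The paper starts from the explicit integral representation $V(x) = \int_0^\infty p(\varphi_{f^*}(\tau,x))\,d\tau$ (cited from \cite[Theorem 2.46]{Gie07:a}), splits this improper integral at $\theta_{f^*}(x)$, and reparametrizes the tail by $\overline\tau = \tau - \theta_{f^*}(x)$ to recognize it as $\xi_V$ evaluated at the exit point on $\Gamma$. You instead differentiate $t \mapsto V(\varphi_{f^*}(t,x))$, invoke the defining PDE \eqref{eq:Veq} to obtain $-p$ along the orbit, and integrate over the finite interval $[0,\theta_{f^*}(x)]$, then substitute the boundary value on $\Gamma$. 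Your route is more elementary and self-contained: it uses only the orbital-derivative identity rather than the explicit formula for $V$, and (as you correctly note) works equally well for either sign of $\theta_{f^*}(x)$ and for any additive normalization of $V$, since $\xi_V$ shifts with $V$. The paper's route instead relies on the specific normalization $V(\overline{x})=0$ needed for the integral representation, and is slightly shorter given that the formula is already available by citation. Both give the same conclusion; yours foregrounds the boundary-value-problem reading of the Lyapunov PDE, which is arguably more natural in the context of the generalized interpolation framework used later in the paper.
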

\begin{proof}
It is shown in \cite[Theorem 2.38]{Gie07:a} that the function $\theta_{f^*}$ is well-defined and belongs to $C^{\nu_1}(A(\overline{x})\backslash\{\overline{x}\},\mathbb{R})$. Also in \cite[Theorem 2.46]{Gie07:a} it is shown that
\begin{equation*}
V(x) = \int_0^\infty p(\varphi_{f^*}(\tau,x))d\tau
\end{equation*}
Let $y = \varphi_{f^*}(\theta_{f^*}(x),x) \in \Gamma$. Then, for $x\in A(\overline{x})\backslash\{\overline{x}\}$,
\begin{eqnarray*}
V(x) & = &  \int_{\theta_{f^*}(x)}^\infty p(\varphi_{f^*}(\tau,x))d\tau + \int_0^{\theta_{f^*}(x)} p(\varphi_{f^*}(\tau,x))d\tau\\
& = & \int_{0}^\infty p(\varphi_{f^*}(\overline\tau ,y))d\overline\tau + \int_0^{\theta_{f^*}(x)} p(\varphi_{f^*}(\tau,x))d\tau \qquad\text{(using $\overline{\tau} = \tau - \theta_{f^*}(x)$)}\\
& = & \xi_V(y) +  \int_0^{\theta_{f^*}(x)} p(\varphi_{f^*}(\tau,x))d\tau
\end{eqnarray*}
which proves the Lemma.
\qquad\end{proof}


\begin{remark}						\label{rem:xiT}
Similarly, the Lyapunov function $T$ has the representation
\begin{equation*}
T(x) = \xi_T(\varphi_{f^*}(\theta_{f^*}(x),x)) + \overline{c}\,\theta_{f^*}(x),\qquad x\in A(\overline{x})\backslash\{\overline{x}\},
\end{equation*}
with $\xi_T\in C^{\nu_1}(\Gamma,\mathbb{R})$ as above.
\end{remark}

We  aim to approximate the Lyapunov functions $V$ and $T$ in a compact subset of the basin of attraction $A(\overline{x})$. This subset is given by $\mathcal{D}:=\Omega\setminus B_\varepsilon(\overline{x})$ for a given $\varepsilon >0$, where  $\Omega$ is compact, cf. Theorem \ref{thm:mainresult}. See Figure \ref{fig:domains} for a sketch of these domains.

For the approximation of $V$, we define $\tilde\Omega_V := \{x\in A(\overline{x}) \mid V(x) \le R\}$  with $R>0$ large enough so that $\Omega \subset\tilde\Omega_V$. Similarly for $T$, we choose $\tilde\Omega_T := \{x\in A(\overline{x})\backslash\{\overline{x}\} \mid T(x) \le R\}$ with $R>0$ large enough so that $\Omega \subset\tilde\Omega_T$.

Recall that $\Gamma$ is a non-characteristic hypersurface for $f^*$ (and thus also for $f_{\mathbf{z},\lambda}$, if $f_{\mathbf{z},\lambda}$ and $f^*$ are sufficiently close in supremum norm). Additionally, we may define $\tilde\Gamma:= \varphi_{f^*}(T,\Gamma)$ with $T>0$ sufficiently large so that $\tilde\Gamma\subset B_\varepsilon(\overline{x})$.
 Note that $\tilde\Gamma$ is  a non-characteristic hypersurface for $f^*$ (also for $f_{\mathbf{z},\lambda}$), defined by some $\tilde{h}\in C^{\nu_1}(\mathbb{R}^d,\mathbb{R})$. Then we define the Lipschitz domains ${\Omega}_V:=\tilde\Omega_V\cap\{x\in A(\overline{x})\mid \tilde{h}(x)\ge 0\}$ and ${\Omega}_T:=\tilde\Omega_T\cap\{x\in A(\overline{x})\mid \tilde{h}(x)\ge 0\}$ (cf. Theorem \ref{thm:GieWen}), and note that $\mathcal{D}\subset\Omega_V$ and $\mathcal{D}\subset\Omega_T$. Also note that all orbits (for $f^*$ and $f_{\mathbf{z},\lambda}$) enter and exit ${\Omega}_V$ (and ${\Omega}_T$) only once.

%
%
%

Our algorithm detailed in \S\ref{sec:algorithm} computes the generalised interpolant approximations $\hat{V}$ and $\hat{T}$ corresponding to the vector field approximation $f_{\mathbf{z},\lambda}$ (as in Theorem \ref{thm:GieWen} with $g = f_{\mathbf{z},\lambda}$).

Note that for    $\max_k\left(||f^k_{\mathbf{z},\lambda}-f^{*,k}||_{L^\infty(X)}\right)$ sufficiently small (recall the superscript $k$ denotes the $k$-th component), $f_{\mathbf{z},\lambda}$ does not have any equilibria in $\Omega_V$ (resp. $\Omega_T$). Similarly,  $\Gamma, \tilde{\Gamma}$ are both non-characteristic hypersurfaces for $f_{\mathbf{z},\lambda}$, and  all trajectories in  $\Omega_V$ (resp. $\Omega_T$) eventually enter (and stay in) the region defined by $\{x\in A(\overline{x}) \mid \tilde{h}(x) <0\}$.

In fact, for $||\mathbf{w}||_{\mathbb{R}^m}$ and $h_{\mathbf{x}}$ sufficiently small, $f_{\mathbf{z},\lambda}$ and $f^*$ are even close in a $C^{\nu_2}$ sense, as we will show in the following Lemmas \ref{lem:fzlambdafstarbounded} and \ref{lem:fzlambdafstarepsilon}.

\begin{lemma}			\label{lem:fzlambdafstarbounded}
For $\lambda >0$, for every $0<\delta<1$, with probability $1-\delta$, we have
\begin{equation}		\label{eq:fzlambdafstarbounded}
||f^k_{\mathbf{z},\lambda} - f^{*,k}||_{K^1} \le \frac{||\mathbf{w}||_{\mathbb{R}^m}\sigma\kappa}{\lambda\sqrt{\delta}} + 2 ||f^{*,k}||_{K^1},\qquad k=1,\ldots,d.
\end{equation}
\end{lemma}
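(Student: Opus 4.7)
The plan is to reduce this to the sample error estimate already established in Lemma \ref{lem:sampleerror} plus a deterministic bias estimate, via a single triangle inequality. Concretely, I insert the intermediate function $f_{\mathbf{x},\lambda}^k$ defined in \eqref{eq:fxlambda} and write
\begin{equation*}
||f_{\mathbf{z},\lambda}^k - f^{*,k}||_{K^1} \le ||f_{\mathbf{z},\lambda}^k - f_{\mathbf{x},\lambda}^k||_{K^1} + ||f_{\mathbf{x},\lambda}^k - f^{*,k}||_{K^1}.
\end{equation*}
The first (stochastic) term is handled directly by Lemma \ref{lem:sampleerror}, which gives, with probability $1-\delta$, the bound $||f_{\mathbf{z},\lambda}^k - f_{\mathbf{x},\lambda}^k||_{K^1} \le ||\mathbf{w}||_{\mathbb{R}^m}\sigma\kappa/(\lambda\sqrt{\delta})$. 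The work is therefore concentrated in controlling the deterministic bias term by $2||f^{*,k}||_{K^1}$.

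For the bias term, I would use the representation $f_{\mathbf{x},\lambda}^k = (S_{\mathbf{x}}^* D_w S_{\mathbf{x}} + \lambda I)^{-1} S_{\mathbf{x}}^* D_w S_{\mathbf{x}} f^{*,k}$ that comes from \eqref{eq:fxlambda} and \eqref{eqn:Ldef}. Introducing the self-adjoint operator $A_\lambda := (S_{\mathbf{x}}^* D_w S_{\mathbf{x}} + \lambda I)^{-1} S_{\mathbf{x}}^* D_w S_{\mathbf{x}}$ on $\mathcal{H}_{K^1}$, one has the algebraic identity $A_\lambda = I - \lambda (S_{\mathbf{x}}^* D_w S_{\mathbf{x}} + \lambda I)^{-1}$. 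The operator $S_{\mathbf{x}}^* D_w S_{\mathbf{x}}$ is positive semidefinite on $\mathcal{H}_{K^1}$ because $D_w$ is a diagonal matrix with non-negative entries, so by the spectral theorem its eigenvalues $\mu_i\ge 0$ give $A_\lambda$ eigenvalues $\mu_i/(\mu_i+\lambda)\in [0,1)$. Hence $||A_\lambda||_{K^1\to K^1}\le 1$, which in particular yields $||f_{\mathbf{x},\lambda}^k||_{K^1}\le ||f^{*,k}||_{K^1}$, and a further triangle inequality gives
\begin{equation*}
||f_{\mathbf{x},\lambda}^k - f^{*,k}||_{K^1} \le ||f_{\mathbf{x},\lambda}^k||_{K^1} + ||f^{*,k}||_{K^1} \le 2||f^{*,k}||_{K^1}.
\end{equation*}

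Combining the two estimates produces exactly \eqref{eq:fzlambdafstarbounded}. The only genuinely delicate point is the norm bound $||A_\lambda||\le 1$, which hinges on $S_{\mathbf{x}}^* D_w S_{\mathbf{x}}$ being positive semidefinite in $\mathcal{H}_{K^1}$ (equivalently, on the weights $w_{x_i}$ being non-negative, cf.\ \eqref{eq:SDS}). Since this is already implicit in the proof of Lemma \ref{lem:sampleerror} and in the invertibility argument preceding \eqref{eq:SDS}, no new technical ingredient is actually required, and the proof should be short.
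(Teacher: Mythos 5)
Your proof is correct and arrives at the required estimate, but the route you take to control the bias term $\|f^k_{\mathbf{x},\lambda}-f^{*,k}\|_{K^1}$ differs from the paper's. Both proofs use the identical triangle inequality through $f^k_{\mathbf{x},\lambda}$ and invoke Lemma~\ref{lem:sampleerror} for the stochastic piece; both then reduce the bias term to showing $\|f^k_{\mathbf{x},\lambda}\|_{K^1}\le\|f^{*,k}\|_{K^1}$ followed by another triangle inequality. Where you diverge is in how that last inequality is established. The paper argues variationally: from the characterisation \eqref{eq:fkxlambda} of $f^k_{\mathbf{x},\lambda}$ as a regularised minimiser, it compares the objective value against that of the norm-minimal interpolant $g^k_0$ of the data $f^{*,k}(x_i)$, using $\sum_i w_{x_i}(g^k_0(x_i)-f^{*,k}(x_i))^2=0$ to conclude $\lambda\|f^k_{\mathbf{x},\lambda}\|_{K^1}^2\le\lambda\|g^k_0\|_{K^1}^2\le\lambda\|f^{*,k}\|_{K^1}^2$. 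You instead argue operator-theoretically: from the explicit representation $f^k_{\mathbf{x},\lambda}=A_\lambda f^{*,k}$ with $A_\lambda=(B+\lambda I)^{-1}B$ and $B:=S_{\mathbf{x}}^*D_wS_{\mathbf{x}}$ self-adjoint positive semidefinite (because the Voronoi weights are non-negative), the spectral calculus gives eigenvalues $\mu_i/(\mu_i+\lambda)\in[0,1)$ and hence $\|A_\lambda\|\le1$. Both routes are sound and rely on $f^{*,k}\in\mathcal{H}_{K^1}$, which the paper also tacitly assumes (both in Lemma~\ref{lem:fzlambdafstarbounded} and in Lemma~\ref{lem:regularisationerror}). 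Your spectral argument is arguably slightly cleaner and also makes visible a sharper alternative: from $f^k_{\mathbf{x},\lambda}-f^{*,k}=-\lambda(B+\lambda I)^{-1}f^{*,k}$ together with \eqref{eq:SDS}, one gets $\|f^k_{\mathbf{x},\lambda}-f^{*,k}\|_{K^1}\le\|f^{*,k}\|_{K^1}$ directly, halving the constant in the stated bound; the paper's variational route does not see this improvement. Note that the identity $A_\lambda=I-\lambda(B+\lambda I)^{-1}$ you wrote down, combined merely with the triangle inequality and \eqref{eq:SDS}, only gives $\|A_\lambda\|\le2$; it is indeed the spectral argument, as you say, that is needed to get $\le1$.
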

\begin{proof}
From Lemma \ref{lem:fzlambda} and equation \eqref{eq:fxlambda}, we see that for each $k=1,\ldots,d$:
\begin{equation}			\label{eq:fkxlambda}
f^k_{\mathbf{x},\lambda}=\arg\min_{g\in\mathcal{H}_{K^1}}\left\{ \sum_{i=1}^m w_{x_i}(g(x_i)-f^{*,k}(x_i))^2 + \lambda||g||^2_{K^1}\right\}.
\end{equation}
We will show that $||f^k_{\mathbf{x},\lambda}||_{K^1}$ is bounded independently of $(\mathbf{x},\lambda)$. To see this, first note that due to the choice of the positive definite Wendland function kernel $K^1$, that it is always possible to find a norm-minimal function $g^k_0\in\mathcal{H}_{K^1}$ that interpolates the data. That is, $g^k_0$ is the solution to the problem
\begin{equation*}
\min_{g\in\mathcal{H}_{K^1}}\left\{||g||_{K^1}~:~ g(x_i) = f^{*,k}(x_i),~i=1,\ldots,m\right\}.
\end{equation*}
Therefore we have that $||g^k_0||_{K^1} \le ||f^{*,k}||_{K^1}$. Now from \eqref{eq:fkxlambda} we have the following:
\begin{eqnarray*}
\lambda || f^k_{\mathbf{x},\lambda} ||^2_{K^1} & \le & \sum_{i=1}^m w_{x_i}( f^k_{\mathbf{x},\lambda}(x_i)-f^{*,k}(x_i))^2 + \lambda|| f^k_{\mathbf{x},\lambda}||^2_{K^1}\\
& \le & \sum_{i=1}^m w_{x_i}(g^k_0(x_i)-f^{*,k}(x_i))^2 + \lambda||g^k_0||^2_{K^1}\\
& = & \lambda ||g^k_0||^2_{K^1}\\
& \le & \lambda ||f^{*,k}||_{K^1}^2
\end{eqnarray*}
So then $|| f^k_{\mathbf{x},\lambda} - f^{*,k} ||_{K^1} \le || f^k_{\mathbf{x},\lambda}||_{K^1} + ||f^{*,k}||_{K^1} \le 2 ||f^{*,k}||_{K^1}$.

In Lemma \ref{lem:sampleerror} we have provided a bound for $|| f^k_{\mathbf{z},\lambda} - f^k_{\mathbf{x},\lambda}||_{K^1}$ for a given probability $1 - \delta$. Then together we find with probability $1 - \delta$,
\begin{eqnarray*}
||f^k_{\mathbf{z},\lambda} - f^{*,k}||_{K^1} & \le & ||f^k_{\mathbf{z},\lambda} - f^k_{\mathbf{x},\lambda}||_{K^1} + ||f^k_{\mathbf{x},\lambda} - f^{*,k}||_{K^1} \\
& \le & \frac{||\mathbf{w}||_{\mathbb{R}^m}\sigma\kappa}{\lambda\sqrt{\delta}} + 2 ||f^{*,k}||_{K^1}
\end{eqnarray*}
which proves the Lemma.
\qquad\end{proof}

We will need the following convergence result in Lemma \ref{lem:fzlambdafstarepsilon}.

\begin{theorem}[\cite{WenRie05}]		\label{thm:WenRie}
Suppose $X\subseteq\mathbb{R}^d$ is bounded and satisfies an interior cone condition with radius $r$ and angle $\theta$. Let $\tilde\tau$ be a positive integer, $0< s \le 1$, $1\le p < \infty$, $1 \le q \le \infty$ and let $m\in\mathbb{N}_0$ satisfy $\tilde\tau > m+d/p$, or, if $p=1$, $\tilde\tau\ge m+d$. Then there exists a constant $C>0$ depending only on $\tilde\tau,d,p,q,m,\theta$  such that every discrete set $\Pi\subseteq\Omega$ with mesh norm $h_{\Pi}$ sufficiently small, and every $u\in W_p^{\tilde\tau+s}(X)$ the estimate
\begin{equation}
|u|_{W_q^m(X)} \le C\left( h_\Pi^{\tilde\tau+s-m-d(1/p - 1/q)_+} |u|_{W_p^{\tilde\tau+s}(X)} + h_{\Pi}^{-m}||u|\Pi||_{l^\infty(\Pi)}\right)
\end{equation}
is satisfied. Here, $(x)_+ = \max\{x,0\}$, and we use the notation $u|\Pi$ to denote the restriction of $u$ to the set $\Pi$.
%
\end{theorem}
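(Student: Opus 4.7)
The plan is to follow the standard route for sampling inequalities of Narcowich--Ward--Wendland type: combine a localized Bramble--Hilbert polynomial approximation with a polynomial norming argument driven by the sample values, and then patch the local estimates into a global one.

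First, I would use the interior cone condition to attach to each $x\in X$ a star-shaped subregion $\Omega_x\subset X$ of diameter comparable to $h_\Pi$, chosen so that $\Omega_x$ contains a set of points of $\Pi$ that is unisolvent for polynomials of degree less than $\tilde\tau$ with a norming constant $C_0$ independent of $x$ and $h_\Pi$. This is the geometric core: once the scale is right, a standard compactness / scaling argument (zoom $\Omega_x$ to a reference configuration) yields
\begin{equation*}
\|P\|_{L^\infty(\Omega_x)}\le C_0\max_{\xi\in\Pi\cap\Omega_x}|P(\xi)|
\end{equation*}
for every $P\in\pi_{\tilde\tau-1}(\mathbb{R}^d)$. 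Feasibility here uses the assumption that $h_\Pi$ is sufficiently small together with the interior cone condition with fixed radius $r$ and angle $\theta$.

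Second, I would invoke the Bramble--Hilbert lemma on $\Omega_x$ to produce a polynomial $P_x\in\pi_{\tilde\tau-1}(\mathbb{R}^d)$ with $|u-P_x|_{W_p^j(\Omega_x)}\le C h_\Pi^{\tilde\tau+s-j}|u|_{W_p^{\tilde\tau+s}(\Omega_x)}$ for $0\le j\le\tilde\tau$. The Sobolev embedding $W_p^{\tilde\tau+s}(\Omega_x)\hookrightarrow L^\infty(\Omega_x)$, valid under the hypothesis $\tilde\tau>m+d/p$ (or the endpoint case $p=1$, $\tilde\tau\ge m+d$), yields $\|u-P_x\|_{L^\infty(\Omega_x)}\le Ch_\Pi^{\tilde\tau+s-d/p}|u|_{W_p^{\tilde\tau+s}(\Omega_x)}$. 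Evaluating at any $\xi\in\Pi\cap\Omega_x$, writing $|P_x(\xi)|\le|u(\xi)|+|u(\xi)-P_x(\xi)|$, taking a maximum, and applying the norming inequality from the previous step gives
\begin{equation*}
\|P_x\|_{L^\infty(\Omega_x)}\le C\Bigl(\|u|\Pi\|_{\ell^\infty(\Pi)}+h_\Pi^{\tilde\tau+s-d/p}|u|_{W_p^{\tilde\tau+s}(\Omega_x)}\Bigr).
\end{equation*}

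Third, a Markov--Bernstein inequality for polynomials of fixed degree on $\Omega_x$ gives $|P_x|_{W^m_\infty(\Omega_x)}\le Ch_\Pi^{-m}\|P_x\|_{L^\infty(\Omega_x)}$, and since $|\Omega_x|\asymp h_\Pi^d$ this upgrades to $|P_x|_{W^m_q(\Omega_x)}\le Ch_\Pi^{d/q-m}\|P_x\|_{L^\infty(\Omega_x)}$. The triangle inequality $|u|_{W^m_q(\Omega_x)}\le|u-P_x|_{W^m_q(\Omega_x)}+|P_x|_{W^m_q(\Omega_x)}$ then splits the local estimate into two pieces: the first handled by Bramble--Hilbert together with the local $L^p\!\to\!L^q$ embedding that accounts for the $d(1/p-1/q)_+$ correction, and the second by the bound just derived. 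Both contributions carry the exponent $h_\Pi^{\tilde\tau+s-m-d(1/p-1/q)_+}$ or $h_\Pi^{-m}$ on $\|u|\Pi\|_{\ell^\infty}$ as desired.

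Finally, I would cover $X$ by the family $\{\Omega_x\}$ with bounded overlap (the overlap constant depending only on $d$ and $\theta$), raise local inequalities to the $q$-th power (or take suprema when $q=\infty$), sum over the cover, and take $q$-th roots to assemble the global semi-norm bound. The main obstacle is the geometric step: arranging that every $\Omega_x$ simultaneously sits inside $X$, has diameter $\asymp h_\Pi$, and hosts a configuration of $\Pi$-points with norming constant independent of $x$ and $h_\Pi$; tracking through the scaling so that all constants depend only on $\tilde\tau,d,p,q,m,\theta$ is the delicate bookkeeping that makes the argument work uniformly.
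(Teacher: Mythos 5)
The paper does not prove Theorem~\ref{thm:WenRie}; it is quoted verbatim from \cite{WenRie05} as a citation, so there is no in-paper proof to compare against. Your outline is nevertheless a faithful sketch of how Wendland and Rieger (building on Narcowich--Ward--Wendland \cite{NarWarWen04}) actually establish such sampling inequalities: local polynomial reproduction on cone-condition domains at scale $h_\Pi$, a Bramble--Hilbert estimate with Sobolev embedding to control $\|u-P_x\|_{L^\infty}$, Markov--Bernstein for the polynomial part, and a bounded-overlap cover to globalize. The one place to be careful in making this rigorous is the norming/unisolvency step: the uniform norming constant for $\pi_{\tilde\tau-1}$ on $\Pi\cap\Omega_x$ is itself a nontrivial lemma (it is essentially the local polynomial reproduction result in Wendland's book), and the $h_\Pi$-threshold at which it kicks in depends on $r$, $\theta$, $d$, $\tilde\tau$ --- this is exactly the content of the hypothesis ``$h_\Pi$ sufficiently small,'' which you invoke but should flag as a citable lemma rather than a routine compactness argument.
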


\begin{lemma} 			\label{lem:fzlambdafstarepsilon}
Let $\varepsilon >0$ be arbitrarily small. For every $0 < \delta < 1$, there exists $\iota >0$ such that when
$||\mathbf{w}||_{\mathbb{R}^m}, h_{\mathbf{x}} <\iota$, and $\lambda>0$ chosen according to \eqref{eqn:lambdachoice}, the following estimate holds with probability $1-\delta$:
\begin{eqnarray*}
||f^k_{\mathbf{z},\lambda} - f^{*,k}||_{C^{\nu_2}(X)}& <& \varepsilon,\qquad k=1,\ldots,d,
\end{eqnarray*}
where $\nu_2:= \lceil\tau_2\rceil = k_1-(d+3)/2$  ($d$  odd), and $\nu_2:= \lceil\tau_2\rceil = k_1-(d+4)/2$ ($d$ even).
 \end{lemma}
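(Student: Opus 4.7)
The plan is to upgrade the $L^\infty$ convergence from (\ref{eqn:fzlambdaestnolambda}) to the stronger $C^{\nu_2}$ convergence by combining it with the uniform native-space bound of Lemma \ref{lem:fzlambdafstarbounded}, and interpolating between them via the Wendland--Rieger inverse estimate (Theorem \ref{thm:WenRie}). Set $u := f^k_{\mathbf{z},\lambda} - f^{*,k}$. With $\lambda$ chosen by (\ref{eqn:lambdachoice}), one checks that $\|\mathbf{w}\|_{\mathbb{R}^m}/\lambda \to 0$ as $\|\mathbf{w}\|_{\mathbb{R}^m}, h_\mathbf{x} \to 0$, so Lemma \ref{lem:fzlambdafstarbounded} yields, with probability $1 - \delta$, a uniform bound $\|u\|_{K^1} \le M$ independent of the data parameters; by norm-equivalence, also $\|u\|_{W^{\tau_1}_2(X)} \le CM$.

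Next I would invoke Theorem \ref{thm:WenRie} with $p=2$, $q=\infty$ and $m = j$ for each $j = 0, 1, \ldots, \nu_2$, picking $\tilde\tau$ a positive integer and $s \in (0,1]$ with $\nu_2 + d/2 < \tilde\tau + s \le \tau_1$. This is feasible because the gap between $k_1$ and $k_2$ gives $\tau_1 - \nu_2 - d/2 \ge (d+4)/2 > 0$ when $d$ is odd and $(d+5)/2$ when $d$ is even (e.g.\ take $\tilde\tau = k_1$, $s = 1$ for $d$ odd, and $\tilde\tau = k_1 - 1$, $s = 1$ for $d$ even). Fixing any auxiliary grid $\Pi \subset X$ with sufficiently small mesh norm $h_\Pi$, Theorem \ref{thm:WenRie} gives
\begin{equation*}
|u|_{W^j_\infty(X)} \le C\!\left(h_\Pi^{\,\tilde\tau + s - j - d/2}\,\|u\|_{W^{\tau_1}_2(X)} \;+\; h_\Pi^{-j}\,\|u|_{\Pi}\|_{l^\infty(\Pi)}\right).
\end{equation*}

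I would then exploit the crucial point that $\Pi$ is an auxiliary object chosen independently of the data. Using the uniform bound on $\|u\|_{W^{\tau_1}_2(X)}$ from Step~1, first fix $h_\Pi$ small enough (and independent of the data) so that the first term on the right is at most $\varepsilon/(2(\nu_2+1))$ for every $j \le \nu_2$. With $h_\Pi$ frozen, the factor $h_\Pi^{-j}$ becomes a harmless constant, and the remaining term is controlled by $\|u|_\Pi\|_{l^\infty(\Pi)} \le \|u\|_{L^\infty(X)}$, which by (\ref{eqn:fzlambdaestnolambda}) tends to zero as $\|\mathbf{w}\|_{\mathbb{R}^m}, h_\mathbf{x} \to 0$. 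Choosing $\iota>0$ small enough makes this second term less than $\varepsilon/(2(\nu_2+1))$ for every $j \le \nu_2$. Summing over $j = 0, \ldots, \nu_2$ yields $\|u\|_{C^{\nu_2}(X)} < \varepsilon$ with probability $1-\delta$.

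The principal obstacle is the apparent tension in the inverse estimate: the first term on the right asks $h_\Pi$ to be small while the second term blows up as $h_\Pi \to 0$. This is resolved by the two-scale argument just described — the auxiliary scale $h_\Pi$ is fixed once and for all based on the a priori $K^1$-bound, while the data scales $\|\mathbf{w}\|_{\mathbb{R}^m}$ and $h_\mathbf{x}$ are then sent to zero to kill the discrete error. A secondary technical point that must be verified carefully is that the admissibility conditions of Theorem \ref{thm:WenRie} (integer $\tilde\tau$, $s \in (0,1]$, $\tilde\tau > \nu_2 + d/p$, and $\tilde\tau + s \le \tau_1$) are simultaneously realisable given the precise relation between $k_1$, $k_2$ and $\nu_2$ in both parity cases for $d$.
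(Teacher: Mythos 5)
Your proof is correct and follows essentially the same route as the paper: a uniform native-space bound from Lemma~\ref{lem:fzlambdafstarbounded}, the $L^\infty$ decay from \eqref{eqn:fzlambdaestnolambda}, and the Wendland--Rieger inverse estimate in a two-scale argument where the auxiliary mesh $h_\Pi$ is fixed first and the data density sent to zero afterwards. The one genuine (though minor) variation is your choice of $q=\infty$ in Theorem~\ref{thm:WenRie}: this delivers the $W^j_\infty$ seminorms for $j\le\nu_2$ directly and so lands on the $C^{\nu_2}$ norm in one step, whereas the paper takes $p=q=2$, sums up to $j\le k_1-1$ to obtain a $W_2^{k_1-1}$ bound, and then invokes the generalised Sobolev inequality of Lemma~\ref{lem:genSobolev} to pass to $C^{\nu_2}$; the $d/2$ loss that the paper pays in the embedding you pay instead in the exponent $\tilde\tau+s-j-d(1/p-1/q)_+$. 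One small slip worth fixing: the admissibility condition of Theorem~\ref{thm:WenRie} is $\tilde\tau > m+d/p$, i.e.\ $\tilde\tau>\nu_2+d/2$, not $\tilde\tau+s>\nu_2+d/2$ as you write (the latter ensures only that the $h_\Pi$-exponent is positive); your concrete choices $\tilde\tau=k_1$ ($d$ odd) and $\tilde\tau=k_1-1$ ($d$ even) do satisfy the correct condition, so the proof goes through.
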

\begin{proof}
Note that since $X$ has a $C^1$ boundary, it satisfies the interior cone condition from Theorem~\ref{thm:WenRie} (see \cite[Definition 3.6]{Wen05}).
Let $j\in\mathbb{N}_0$ be such that $j\le k_1 - 1$.
Recall that $\tau_1 = k_1 + (d+1)/2$ with $\lceil\tau_1\rceil = \nu_1$. Then when $d$ is odd, we have $j < \tau_1 - 1 - d/2$. When $d$ is even, we have $\lfloor\tau_1\rfloor - d/2 = k_1$ and so $j < \lfloor\tau_1\rfloor - d/2$.

Using Theorem \ref{thm:WenRie}, and the fact that $\mathcal{H}_{K^1}$ and $W_2^{\tau_1}$ are norm-equivalent, we have the following estimate:
\begin{equation*}
|f^k_{\mathbf{z},\lambda} - f^{*,k}|_{W_2^j(X)} \le C\left( h_{\Pi}^{\tau_1-j} |f^k_{\mathbf{z},\lambda} - f^{*,k}|_{W_2^{\tau_1}(X)} + h_{\Pi}^{-j}||f^k_{\mathbf{z},\lambda} - f^{*,k}||_{L^\infty(X)}\right).
\end{equation*}
Note that we have replaced $||f^k_{\mathbf{z},\lambda} - f^{*,k}|\Pi||_{l^\infty(\Pi)}$ in Theorem \ref{thm:WenRie} with $||f^k_{\mathbf{z},\lambda} - f^{*,k}||_{L^\infty(X)}$. Then the discrete set $\Pi$ from Theorem \ref{thm:WenRie} may be taken to be any discrete set in $X$, and so the fill distance $h_{\Pi}$ in the above can be taken to be arbitrarily small.

Now, from \eqref{eqn:fzlambdaestnolambda} we see that $||f^k_{\mathbf{z},\lambda} - f^{*,k}||_{L^\infty(X)}$
 can be made arbitrarily small for small $||\mathbf{w}||_{\mathbb{R}^m}$ and $h_{\mathbf{x}}$, and suitably chosen $\lambda>0$ as in \eqref{eqn:lambdachoice}. The estimate \eqref{eqn:fzlambdaestnolambda}  holds with probability $1-\delta$ (for $0<\delta<1$), where $\delta$ here is the same as in Lemma \ref{lem:fzlambdafstarbounded}, as the estimate depends on the same probabilistic inequality for $||f^k_{\mathbf{z},\lambda} - f^k_{\mathbf{x},\lambda}||_{K^1}$ (cf. \eqref{eq:sampleerror}).
Then we have from \eqref{eq:fzlambdafstarbounded} (and using again the norm-equivalance of $\mathcal{H}_{K^1}$ and $W_2^{\tau_1}$), that $ |f^k_{\mathbf{z},\lambda} - f^{*,k}|_{W_2^{\tau_1}(X)}$ is bounded, say $ |f^k_{\mathbf{z},\lambda} - f^{*,k}|_{W_2^{\tau_1}(X)} \le \tilde{C}/k_1$.

Now, given $\tilde\varepsilon>0$, setting
\begin{equation*}
||f^k_{\mathbf{z},\lambda} - f^{*,k}||_{L^\infty(X)}  <  \frac{\tilde{C}}{k_1}\left(\frac{\tilde\varepsilon}{2C\tilde{C}}\right)^{\frac{\tau_1}{\tau_1 - j}}
\qquad\text{and}\qquad h_{\Pi}  <  \left(\frac{\tilde\varepsilon}{2 C \tilde{C}}\right)^{\frac{1}{\tau_1 - j}},
\end{equation*}
we have that
\begin{equation*}
|f^k_{\mathbf{z},\lambda} - f^{*,k}|_{W_2^j(X)}  \le \frac{\tilde\varepsilon}{k_1}.
\end{equation*}
Now using $||f^k_{\mathbf{z},\lambda} - f^{*,k}||_{W_2^{k_1-1}(X)} = \sum_{j=0}^{k_1-1} |f^k_{\mathbf{z},\lambda} - f^{*,k}|_{W_2^j(X)}$ gives the bound $||f^k_{\mathbf{z},\lambda} - f^{*,k}||_{W_2^{k_1-1}(X)} \allowbreak  \le \tilde\varepsilon.$
Then by Lemma \ref{lem:genSobolev} (also using arguments similar to  Corollary \ref{cor:smoothnessK1K2} since $X$ is closed), we have that  $||f^k_{\mathbf{z},\lambda} - f^{*,k}||_{C^{\nu_2}(X)} \le C ||f^k_{\mathbf{z},\lambda} - f^{*,k}||_{W_2^{k_1-1}(X)}$ and setting $\varepsilon = C\tilde\varepsilon$ proves the Lemma.
\qquad\end{proof}



It follows that for sufficiently small $||\mathbf{w}||_{\mathbb{R}^m}$ and $h_{\mathbf{x}}$, that (with probability $1-\delta$) $f_{\mathbf{z},\lambda}$ will have an equilibrium close to $\overline{x}$ which is also exponentially asymptotically stable. In addition, the non-characteristic hypersurfaces $\Gamma$ and $\tilde{\Gamma}$ for $f^*$ will also be non-characteristic hypersurfaces for $f_{\mathbf{z},\lambda}$ (as will the level sets $\{x\in A(\overline{x})\mid V(x)=R\}$, resp. $\{x\in A(\overline{x})\mid T(x)=R\}$). In this case we can define the following `Lyapunov-type' functions for $f_{\mathbf{z},\lambda}$.


\begin{definition}				\label{def:VzlambdaTzlambda}
Let $\varphi_{\mathbf{z},\lambda}(t,\cdot)$ denote the flow operator for the system $\dot{x} = f_{\mathbf{z},\lambda}(x)$. For $||\mathbf{w}||_{\mathbb{R}^m}$ and $h_{\mathbf{x}}$ sufficiently small, $\lambda>0$ chosen according to \eqref{eqn:lambdachoice}, and $0<\delta<1$,  $\Gamma$ will be a non-characteristic hypersurface for $f_{\mathbf{z},\lambda}$ with probability $1-\delta$.
Then the function $\theta_{{\mathbf{z},\lambda}}:\Omega_V\rightarrow \mathbb{R}$
given by $\varphi_{
{\mathbf{z},\lambda}}(t,x) \in \Gamma \Leftrightarrow t = \theta_{{\mathbf{z},\lambda}}(x)$ is well-defined. By a slight abuse of notation we will also similarly define $\theta_{{\mathbf{z},\lambda}}: \Omega_T\rightarrow\mathbb{R}$.

We define the functions $V_{\mathbf{z},\lambda}: \Omega_V\rightarrow\mathbb{R}$ and $T_{\mathbf{z},\lambda}: \Omega_T\rightarrow\mathbb{R}$
by
\begin{eqnarray}
V_{\mathbf{z},\lambda}(x) & = & \xi_V(\varphi_{{\mathbf{z},\lambda}}(\theta_{{\mathbf{z},\lambda}}(x),x)) + \int_0^{\theta_{{\mathbf{z},\lambda}}(x)} p(\varphi_{{\mathbf{z},\lambda}}(\tau ,x))d\tau,\qquad x\in \Omega_V,\label{eqn:Vzlambda}\\
T_{\mathbf{z},\lambda}(x) & = & \xi_T(\varphi_{{\mathbf{z},\lambda}}(\theta_{{\mathbf{z},\lambda}}(x),x)) + \overline{c}\, \theta_{{\mathbf{z},\lambda}}(x),\qquad x\in \Omega_T,			\label{eqn:Tzlambda}
\end{eqnarray}
where $\xi_V\in C^{\nu_1}(\Gamma,\mathbb{R})$ and $\xi_T\in C^{\nu_1}(\Gamma,\mathbb{R})$ are as in Lemma \ref{lem:VxiV} and equation \eqref{eq:TxiT} respectively.
\end{definition}


%


In the proof of the following Lemma we show that in fact $\theta_{\mathbf{z},\lambda}  \in C^{\nu_2}(\Omega_V\cup\Omega_T,\mathbb{R})$, $V_{\mathbf{z},\lambda}\in C^{\nu_2}(\Omega_V,\mathbb{R})$ and $T_{\mathbf{z},\lambda}\in C^{\nu_2}(\Omega_T,\mathbb{R})$ .
\begin{lemma}					\label{lem:Vzlambdabounded}
For every $\varepsilon_1 >0$, and every $0 < \delta < 1$, there is  $\varepsilon_2 >0$ such that if $\max\left\{||\mathbf{w}||_{\mathbb{R}^m}, h_{\mathbf{x}}\right\} < \varepsilon_2$ and  $\lambda>0$ is chosen according to \eqref{eqn:lambdachoice}, then we have with probability $1 - \delta$:
\begin{eqnarray*}
||V_{\mathbf{z},\lambda} - V ||_{C^{\nu_2}(\Omega_V)} < \varepsilon_1,\\
||T_{\mathbf{z},\lambda} - T ||_{C^{\nu_2}(\Omega_T)} < \varepsilon_1.
\end{eqnarray*}
\end{lemma}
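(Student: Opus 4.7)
The plan is to transfer the $C^{\nu_2}$-closeness of $f_{\mathbf{z},\lambda}$ and $f^*$ (Lemma~\ref{lem:fzlambdafstarepsilon}) through the flow map to the explicit representations of $V_{\mathbf{z},\lambda}, T_{\mathbf{z},\lambda}$ given in \eqref{eqn:Vzlambda}--\eqref{eqn:Tzlambda} and compare them to the analogous formulas for $V, T$ in Lemma~\ref{lem:VxiV} and Remark~\ref{rem:xiT}. Fix $0<\delta<1$ and, given any $\mu>0$, choose $\varepsilon_2>0$ so small that when $\max\{\|\mathbf{w}\|_{\mathbb{R}^m},h_{\mathbf{x}}\}<\varepsilon_2$ and $\lambda$ is chosen by \eqref{eqn:lambdachoice}, Lemma~\ref{lem:fzlambdafstarepsilon} gives $\|f_{\mathbf{z},\lambda}-f^*\|_{C^{\nu_2}(X)}<\mu$ with probability $1-\delta$. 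All subsequent claims below hold on this probability-$1-\delta$ event.

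First I would establish uniform $C^{\nu_2}$-closeness of the flows. Since $\Omega_V$ and $\Omega_T$ lie strictly between the non-characteristic hypersurfaces $\Gamma$ and $\tilde\Gamma$ and every orbit of $f^*$ enters and exits these regions exactly once, the $f^*$-travel time $\theta_{f^*}$ is bounded on $\Omega_V\cup\Omega_T$ by some $T_0<\infty$. Standard smooth dependence on parameters (applied inductively to the variational equations up to order $\nu_2$, using that $\nu_1$ exceeds $\nu_2$ by at least $d+2$, so all derivatives needed on the right-hand sides of the higher variational equations are controlled) yields
\begin{equation*}
\sup_{|t|\le 2T_0}\bigl\|\varphi_{\mathbf{z},\lambda}(t,\cdot)-\varphi_{f^*}(t,\cdot)\bigr\|_{C^{\nu_2}(K)} \;\longrightarrow\; 0 \quad\text{as }\mu\to 0,
\end{equation*}
for any compact $K$ containing $\Omega_V\cup\Omega_T$ and its forward images up to time $2T_0$.

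Next I would handle the time-to-hit function. Since $\Gamma$ is non-characteristic for $f^*$, the scalar product $\langle\nabla h(x),f^*(x)\rangle_{\mathbb{R}^d}$ is bounded away from zero on $\Gamma$, and by continuity on a neighbourhood; by the flow closeness above, the same is true for $f_{\mathbf{z},\lambda}$ once $\mu$ is small. Applying the implicit function theorem to the defining equation $h(\varphi_{\mathbf{z},\lambda}(\theta_{\mathbf{z},\lambda}(x),x))=0$ with $h\in C^{\nu_1}$ and using the previous step, one obtains $\theta_{\mathbf{z},\lambda}\in C^{\nu_2}(\Omega_V\cup\Omega_T,\mathbb{R})$ together with
\begin{equation*}
\|\theta_{\mathbf{z},\lambda}-\theta_{f^*}\|_{C^{\nu_2}(\Omega_V\cup\Omega_T)}\longrightarrow 0 \quad\text{as }\mu\to 0.
\end{equation*}

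Finally, substituting these two estimates into \eqref{eqn:Vzlambda}--\eqref{eqn:Tzlambda} and the representations in Lemma~\ref{lem:VxiV} and Remark~\ref{rem:xiT}, the differences $V_{\mathbf{z},\lambda}-V$ and $T_{\mathbf{z},\lambda}-T$ split into a boundary-data term $\xi_{V/T}\circ\varphi_{\mathbf{z},\lambda}(\theta_{\mathbf{z},\lambda}(\cdot),\cdot)-\xi_{V/T}\circ\varphi_{f^*}(\theta_{f^*}(\cdot),\cdot)$ and an action-type integral $\int_0^{\theta_{\mathbf{z},\lambda}(\cdot)}p\circ\varphi_{\mathbf{z},\lambda}-\int_0^{\theta_{f^*}(\cdot)}p\circ\varphi_{f^*}$ (respectively $\overline c(\theta_{\mathbf{z},\lambda}-\theta_{f^*})$ for $T$). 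Because $\xi_V,\xi_T,p\in C^{\nu_1}$ with $\nu_1>\nu_2$, the chain rule together with uniform boundedness of $\theta_{\mathbf{z},\lambda}$ shows each piece tends to $0$ in $C^{\nu_2}(\Omega_V)$ and $C^{\nu_2}(\Omega_T)$ respectively as $\mu\to 0$. Choosing $\varepsilon_2$ so that all ``$\to 0$'' quantities drop below $\varepsilon_1$ completes the proof.

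The principal obstacle is the $C^{\nu_2}$ control of the flows uniformly in time, since naive Gronwall estimates lose derivatives; this is why the paper's assumption $\nu_1\geq\nu_2+(d+2)$ (or $d+3$) is essential, ensuring enough smoothness is available to carry the variational equations through order $\nu_2$. The second delicate point is verifying that the non-characteristic property of $\Gamma$ (and $\tilde\Gamma$) is preserved for $f_{\mathbf{z},\lambda}$, but once $\mu$ is small this is immediate from continuity.
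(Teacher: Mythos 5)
Your proposal follows essentially the same route as the paper's proof: invoke Lemma~\ref{lem:fzlambdafstarepsilon} to get $C^{\nu_2}$ closeness of $f_{\mathbf{z},\lambda}$ to $f^*$, use smooth dependence of the flow on the vector field and the implicit function theorem to control $\theta_{\mathbf{z},\lambda}$ in $C^{\nu_2}$, and then substitute into the representation formulas for $V$ and $T$. The paper packages the smooth-dependence step as a one-parameter family $f(\cdot,\mu)$ in the $C^{\nu_2}$ topology, but the content is the same as your variational-equation argument. One misattribution worth correcting: the gap $\nu_1-\nu_2=d+2$ (resp.\ $d+3$) is \emph{not} what rescues the flow estimate here --- the flow of a $C^{\nu_2}$ vector field is already $C^{\nu_2}$ jointly in time, initial condition and parameter, with no derivative loss, which is precisely what the paper uses. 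The gap is instead consumed earlier, in Lemma~\ref{lem:fzlambdafstarepsilon}, where the Wendland--Rieger sampling inequality and Sobolev embedding are used to upgrade $L^\infty$ closeness of $f_{\mathbf{z},\lambda}$ to $f^*$ into $C^{\nu_2}$ closeness, and that is where the roughly $d/2$-plus losses accumulate.
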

\begin{proof} We will prove the result for $V_{\mathbf{z},\lambda}$, as the proof for $T_{\mathbf{z},\lambda}$ is similar.
We will show that $V_{\mathbf{z},\lambda}\in C^{\nu_2}(\Omega_V,\mathbb{R})$, and $||V_{\mathbf{z},\lambda} - V ||_{C^{\nu_2}(\Omega_V)}$ can be made arbitrarily small as $||f_{\mathbf{z},\lambda} - f^* ||_{C^{\nu_2}(\Omega_V)}\rightarrow 0$. Then the result will follow from Lemma \ref{lem:fzlambdafstarepsilon}. The proof follows the ideas contained in \cite[Theorem 2.38]{Gie07:a}.

We consider a one-parameter family of vector fields ${f}(\cdot,\mu)$, $\mu\in\mathbb{R}$, in the $C^{\nu_2}$ topology such that ${f}(\cdot,0) = f^*$. Let $\varphi(t,\cdot,\mu)$ denote the corresponding one-parameter family of flow operators and note that $\varphi$ is $C^{\nu_2}$ in each of its arguments. For $\varepsilon>0$ sufficiently small, $|\mu|<\varepsilon$,  $\Gamma$ is a non-characteristic hypersurface for each $f(\cdot,\mu)$, and  all orbits of $f(\cdot,\mu)$ in $\Omega_V$ enter and exit $\Omega_V$ precisely once. Then we define the one-parameter family of functions $\theta(\cdot,\mu) :\Omega_V\rightarrow\mathbb{R}$ by $\varphi(t,x,\mu) \in \Gamma \Leftrightarrow t = \theta(x,\mu)$. We show that $\theta\in C^{\nu_2}(\Omega_V\times[-\varepsilon,\varepsilon],\mathbb{R})$ by the implicit function theorem. Note that $\theta$ is the solution $t$ to
\begin{equation}
F(x,t,\mu) := h(\varphi(t,x,\mu))	=0		\label{eq:implicittheta}
\end{equation}
where $h$ is as in Definition \ref{def:noncharhyp}. Let $(t^*,x^*,\mu^*)$ be a solution to \eqref{eq:implicittheta}. Then we have $ \frac{d}{dt}F(t^*,x^*,\mu^*) <0$
by Definition \ref{def:noncharhyp}. But since $h\in C^{\nu_1}(\mathbb{R}^d,\mathbb{R})$ and $\varphi$ is a $C^{\nu_2}$ function in $(x,t,\mu)$, we have that $\theta\in C^{\nu_2}(\Omega_V\times[-\varepsilon,\varepsilon],\mathbb{R})$ by the implicit function theorem.

For each $\mu$, define
\begin{equation}					\label{eqn:Vtilde}
\tilde{V}(x,\mu)  =  \xi_V(\varphi(\theta(x,\mu),x,\mu)) + \int_0^{\theta(x,\mu)} p(\varphi(\tau ,x,\mu))d\tau,\qquad x\in \Omega_V.
\end{equation}
Then it follows that $\tilde{V}\in C^{\nu_2}(\Omega_V\times [-\varepsilon,\varepsilon],\mathbb{R})$. It may also readily be verified that
\begin{equation*}
\langle \nabla \tilde{V}(x,\mu),f(x,\mu)\rangle_{\mathbb{R}^d} = -p(x),\qquad x\in\Omega_V.
\end{equation*}
Note that $\tilde{V}(x,0) = V(x)$ by Lemma \ref{lem:VxiV}. Now it is clear by \eqref{eqn:Vtilde} that $||\tilde{V}(\cdot,\mu) - V ||_{C^{\nu_2}}\rightarrow 0$ as $\mu\rightarrow 0$. But since $f(\cdot,\mu)$ is any one parameter family in the $C^{\nu_2}$ topology with $f(\cdot,0) = f^*$, we use Lemma  \ref{lem:fzlambdafstarepsilon} (and $\Omega_V\subset X$) to deduce that $V_{\mathbf{z},\lambda}\in C^{\nu_2}(\Omega_V,\mathbb{R})$, and $||V_{\mathbf{z},\lambda} - V ||_{C^{\nu_2}}\rightarrow 0$ as $||f_{\mathbf{z},\lambda} - f^* ||_{C^{\nu_2}}\rightarrow 0$ for $||\mathbf{w}||_{\mathbb{R}^m}$ and $h_{\mathbf{x}}$ sufficiently small, and $\lambda>0$ chosen according to \eqref{eqn:lambdachoice}.
\qquad\end{proof}

\begin{remark}				\label{rem:orbderVTzlambda}
It follows from the proof of Lemma \ref{lem:Vzlambdabounded} and from \eqref{eqn:Vzlambda} and \eqref{eqn:Tzlambda} that provided $\theta_{\mathbf{z},\lambda}$ is well defined (which is guaranteed with probability $1-\delta$), we have:
\begin{eqnarray}
\langle \nabla V_{\mathbf{z},\lambda}(x),f_{\mathbf{z},\lambda}(x)\rangle_{\mathbb{R}^d}& =& -p(x),\qquad x\in\Omega_V,	\label{eqn:Vzlambdaorbder}\\
\langle \nabla T_{\mathbf{z},\lambda}(x),f_{\mathbf{z},\lambda}(x)\rangle_{\mathbb{R}^d}& = &-\overline{c},\qquad x\in\Omega_T.	\label{eqn:Tzlambdaorbder}
\end{eqnarray}
\end{remark}
We now define a pairwise distinct, discrete set of points $\mathbf{q}:=(q_i)_{i=1}^M \subset \Omega_V$ (resp. $\Omega_T$). Note that these points need not be the same as $\mathbf{x}$. Let $h_{\mathbf{q}}$ be the fill distance of $\mathbf{q}$ in $\Omega_V$ (resp. $\Omega_T$). We compute our approximations $\hat{V}$ and $\hat{T}$ according to our algorithm given in \S\ref{sec:algorithm}.
We have (for $\hat{V}$, the arguments for $\hat{T}$ are similar)
\begin{eqnarray*}
\langle \nabla\hat{V}, f^*\rangle_{\mathbb{R}^d} & = &  \langle \nabla\hat{V}, f_{\mathbf{z},\lambda}  -f_{\mathbf{z},\lambda} +f^*\rangle_{\mathbb{R}^d}\\
\Rightarrow\langle \nabla\hat{V}, f^*\rangle_{\mathbb{R}^d} + p(\cdot) & = &  \langle \nabla\hat{V}, f_{\mathbf{z},\lambda}\rangle_{\mathbb{R}^d}  - \langle\nabla\hat{V},f_{\mathbf{z},\lambda} -f^*\rangle_{\mathbb{R}^d} + p(\cdot).
\end{eqnarray*}
Then we have, for $x\in\mathcal{D}$,
\begin{eqnarray*}
\langle \nabla\hat{V}(x), f^*(x)\rangle_{\mathbb{R}^d} + p(x) & \le & \langle \nabla\hat{V}(x), f_{\mathbf{z},\lambda} (x)\rangle_{\mathbb{R}^d} + p(x)\nonumber\\
&&  + \tilde{C}_2\max_k\left(||f^k_{\mathbf{z},\lambda} - f^{*,k}||_{L^\infty(\mathcal{D})} . ||(\nabla \hat{V})^k||_{L^\infty(\mathcal{D})}\right)  \nonumber\\
& \le & \tilde{C}_1 h_{\mathbf{q}}^{k_2-\frac{1}{2}} ||{V}_{\mathbf{z},\lambda}||_{W^{\tau_2}_2(\Omega_V)} \nonumber\\
& &  +\tilde{C}_2\max_k\left(||f^k_{\mathbf{z},\lambda} - f^{*,k}||_{L^\infty(X)} . ||(\nabla \hat{V})^k||_{L^\infty(\Omega_V)}\right),
\end{eqnarray*}
where recall that $\tau_2 := k_2 + (d+1)/2$ is the degree of the Sobolev RKHS $\mathcal{H}_{K^2}$. The last inequality above follows from Remark \ref{rem:orbderVTzlambda}, Theorem \ref{thm:GieWen}
and $\mathcal{D}\subset\Omega_V \subset X$.  Recall the superscript $k$ denotes the $k$-th component of a $d$-dimensional vector.

Now we use an estimate similar to (3.16) from \cite[Lemma 3.9]{GieHaf15}: recall that $\hat{V}\in W^{\tau_2}_2(\Omega_V)$. Then from Corollary \ref{cor:smoothnessK1K2} we have
\begin{equation*}
||(\nabla \hat{V})^k||_{L^\infty(\Omega_V)} \le ||\hat{V}||_{C^1(\Omega_V)} \le C||\hat{V}||_{\mathcal{H}_{K^2}}.
\end{equation*}
Recall that $\hat{V}$ is the norm-minimal generalised interpolant to $V_{\mathbf{z},\lambda}$ in $\mathcal{H}_{K^2}$ (since $V_{\mathbf{z},\lambda}$ satisfies \eqref{eqn:Vzlambdaorbder}), and  $\mathcal{H}_{K^2}$ is norm-equivalent to $W^{\tau_2}_2(\Omega_V)$. Then $||\hat{V}||_{W^{\tau_2}_2(\Omega_V)} \le C ||{V}_{\mathbf{z},\lambda}||_{W^{\tau_2}_2(\Omega_V)}$.

In addition, Lemma \ref{lem:Vzlambdabounded} shows that
\begin{equation*}
||{V} - {V}_{\mathbf{z},\lambda}||_{W^{\tau_2}_2(\Omega_V)}  \le
C||{V} - {V}_{\mathbf{z},\lambda}||_{W^{\tau_2}_\infty(\Omega_V)}
\le  C||{V} - {V}_{\mathbf{z},\lambda}||_{C^{\nu_2}(\Omega_V)} \le C\varepsilon,
\end{equation*}
and so
 $ ||V_{\mathbf{z},\lambda} ||_{W^{\tau_2}_2(\Omega_V)} \le C ||  V ||_{W^{\tau_2}_2(\Omega_V)}$ for sufficiently small $||\mathbf{w}||_{\mathbb{R}^m}$, $h_{\mathbf{x}}$ with probability $1-\delta$.
Then it follows that
\begin{equation}					\label{eqn:Vestimate}
\langle \nabla\hat{V}(x), f^*(x)\rangle_{\mathbb{R}^d} + p(x)  \le  C_1||{V}||_{W^{\tau_2}_2(\Omega_V)} \left(
h_{\mathbf{q}}^{k_2-\frac{1}{2}}
 + \max_k||f^k_{\mathbf{z},\lambda} - f^{*,k}||_{L^\infty(X)} ||  \right).
\end{equation}
We may similarly show
\begin{equation}
\langle \nabla\hat{T}(x), f^*(x)\rangle_{\mathbb{R}^d} + c  \le   C_1 ||{T}||_{W^{\tau_2}_2(\Omega_T)}
\left(h_{\mathbf{q}}^{k_2-\frac{1}{2}}
 + \max_k||f^k_{\mathbf{z},\lambda} - f^{*,k}||_{L^\infty(X)}
\right).	\label{eqn:Testimate1}
\end{equation}

Furthermore, we can directly apply \eqref{eqn:ests2gamma} from Theorem \ref{thm:GieWen} to obtain
\begin{equation}				\label{eqn:Testimate2}
||\hat{T} - T||_{L^\infty(\Gamma)}  \le  Ch_{\tilde{\mathbf{q}}}^{k_2+\frac{1}{2}}||T||_{W_2^{\tau_2}(\Omega_T)}	
\end{equation}

Combining \eqref{eqn:festimate} with \eqref{eqn:Vestimate}--\eqref{eqn:Testimate2} proves Theorem \ref{thm:mainresult}.\qquad$\square$


\begin{remark}
Furthermore, as $||f^k_{\mathbf{z},\lambda} - f^{*,k}||_{L^\infty(X)}$ converges to zero, we can shrink the ball $B_\varepsilon(\overline{x})$ (and therefore also $\tilde\Gamma$) towards $\overline{x}$. The domains $\Omega_V$ and $\Omega_T$ will converge towards $\tilde\Omega_V$ and $\tilde\Omega_T$ respectively, and therefore $V_{\mathbf{z},\lambda}$ and $T_{\mathbf{z},\lambda}$ will converge to $V$ and $T$ respectively. However, we do not give estimates for how fast $h_{\mathbf{x}}$ and $\mathbf{w}$ would need to converge to zero relative to $\varepsilon$.
\end{remark}

\section{Acknowledgements}
B. Hamzi was supported by a Marie Curie Fellowship Grant Number 112C006. M. Rasmussen was supported by an EPSRC Career Acceleration Fellowship EP/I004165/1 and K.N. Webster was supported by the EPSRC Grant EP/L00187X/1 and
a Marie Sk\l odowska-Curie Individual Fellowship Grant Number 660616.

We would also like to thank Holger Wendland for drawing our attention to the reference for Theorem \ref{thm:WenRie}.

\end{document}